\documentclass[11pt,reqno]{amsart}
\usepackage{geometry}
\usepackage{stmaryrd}
\usepackage{amsfonts,amsmath,amssymb}
\usepackage{enumerate}
\setlength{\oddsidemargin}{0.0in}
\setlength{\evensidemargin}{0.0in}
\setlength{\textwidth}{6.5in}
\setlength{\topmargin}{0.0in}
\setlength{\textheight}{8.5in}
\usepackage{mathabx}
\usepackage{mathrsfs}
\usepackage[latin1]{inputenc}
\usepackage[T1]{fontenc}
\usepackage[usenames,dvipsnames]{pstricks}
\usepackage{appendix}
\newtheorem{theorem}{Theorem}[section]

\newtheorem{lemma}[theorem]{Lemma}

\newtheorem{proposition}[theorem]{Proposition}

%% number sets
\newcommand{\R}{\mathbb{R}}
\newcommand{\C}{\mathbb{C}}

\newcommand{\N}{\mathbb{N}}

\newcommand{\cQ}{\mathcal{Q}}

\newcommand{\im}{\mathrm{Im}}
\newcommand{\norm}[1]{\|#1\|}
\newcommand{\abs}[1]{|#1|}

\usepackage{graphicx}

\newcommand{\bsd}{\textrm{BSD}}

\usepackage{mathrsfs}
\usepackage[latin1]{inputenc}
\usepackage[T1]{fontenc}
\usepackage{color,xspace}
\usepackage[usenames,dvipsnames]{pstricks}
\usepackage{times}
\allowdisplaybreaks
\setlength{\parindent}{1em}
\setlength{\parskip}{0.5em}
%%%%%%%%%%%%%%%%%%%
%% number sets

%%%%%%%%%%%%%%%%%%%%%%%%%%%%%%%%%
%%%%%%%%%%%%%%%%%%%%%%%%%%%%%%%%%%%%%%%%%%%%%%%%%%%%%%%%%%%

\usepackage{subcaption}
%%%%%%%%%%%%%%%%%%%%%%%%%%%%%%%%%%%%%%%%%%%%%%%%%%%
\usepackage{hyperref}
\hypersetup{
    colorlinks=true,
    linkcolor=blue,%magenta,%blue,
    filecolor=blue,%magenta,
    urlcolor=cyan,
    pdftitle={Overleaf Example},
    pdfpagemode=A4%FullScreen,
    }
%%%%%%%%%%%%%%%%%%%%%%%%%%%%%%%%%%%%%%%%%

\begin{document}
%%%%%%%%%%%%%%%%%%%%%%%%%%%%%%%%%%%%%%%%%%%%%%%%%%%%%%%%%%%%%%%%
\title[Stable determination of unbounded potential by asymptotic boundary spectral data]
{Stable determination of unbounded potential by asymptotic boundary spectral data}
%
%%%%%%%%
\author[Y. Kian and \'E. Soccorsi]{Yavar Kian and \'Eric Soccorsi}
\address{Aix-Marseille Univ, Université de Toulon, CNRS, CPT, Marseille, France}

\email{yavar.kian@univ-amu.fr}
 
\address{Aix-Marseille Univ, Université de Toulon, CNRS, CPT, Marseille, France}

\email{eric.soccorsi@univ-amu.fr}

%%%%%%%%%%%%%%%%%%%%%%%%%%%%%%%%%%%%%%%%%%  

%%%%%%%%%%%%%%%%%%%%%%%%%%%%%%%%%%%%%%%%%%%              
%\subjclass[2019]{Primary 35K10, Secondary: 35R30.}     
%\keywords{Hyperbolic inverse problem;  stability estimate;             Dirichlet-to-Neumann map; magnetic field.}                       
%%%%%%%%%%%%%%%%%%%%%%%%%%%%%%%%%%%%%%%%%%%%        
%\date{\today}                                      
\maketitle               

\begin{abstract}             
We consider the Dirichlet Laplacian $A_q=-\Delta+q$
in a bounded domain $\Omega \subset \mathbb{R}^d$, $d \ge 3$, with real-valued perturbation $q \in L^{\max(2,3 d \slash 5)}(\Omega)$. We examine the stability issue in the inverse problem of determining the electric potential $q$
from the asymptotic behavior of the eigenvalues of $A_q$. Assuming that the boundary measurement of the normal derivative of the eigenfunctions is a square summable sequence in $L^2(\partial \Omega)$, we prove that $q$ can be H\"older stably retrieved through knowledge of the asymptotics of the eigenvalues. \end{abstract}

\section{Introduction}
\subsection{Settings}
Let $\Omega \subset \mathbb{R}^d$, $d \geq 3$, be a bounded domain with $\mathcal C^2$ boundary $\Gamma:=\partial\Omega$. 
We consider the perturbed Dirichlet Laplacian $A_q:=-\Delta +q$ in $L^2(\Omega)$, where $q$ is taken within the class
$$ \cQ_{c_0}(M) := \left\{ q \in L^{\max(2,3 d \slash 5)}(\Omega,\R)\ \mbox{s. t.}\ \norm{q}_{L^{\max(2,3 d \slash 5}(\Omega)} \leq M\ \mbox{and}\ q(x) \geq -c_0,\ x \in \Omega \right\}, $$
associated with two {\it a priori} fixed positive constants $c_0$ and $M$. More precisely, $A_q$ is the self-adjoint operator in $L^2(\Omega)$, generated by the closed Hermitian form
$$
a_q(u,v) := \int_\Omega \left( \nabla u \cdot \nabla \overline{v} + q u \overline{v} \right) dx,\ u, v \in D(a_q):=H_0^1(\Omega),
$$
see e.g. \cite[Appendix A]{BKMS}. By compact embedding of $H_0^1(\Omega)$ in $L^2(\Omega)$, the operator $A_q$ has a compact resolvent. Therefore, there exist a sequence of eigenfunctions $\phi_k \in D(A_q)=\{ u \in H_0^1(\Omega),\ (-\Delta+q) u \in L^2(\Omega) \}$ which form an orthonormal basis of $L^2(\Omega)$, and a sequence of eigenvalues
%$\lambda_k \in (-c,+\infty)$, arranged in non-decreasing order and repeated with the multiplicity, such that
$$-c_0<\lambda_{1}\leq \lambda_{2}\leq \ldots \leq \lambda_{k} \leq \lambda_{k+1} \leq \ldots,$$
repeated with the multiplicity, satisfying
$\lim_{k \to +\infty} \lambda_k=+\infty$ and
$$
A_q \phi_k=\lambda_k \phi_k,\ k \geq 1.
$$
For all $k \geq 1$, we put $\psi_k:=(\partial_\nu \phi_k)_{| \Gamma}$, where $\nu$ denotes the outward pointing unit normal vector to $\Gamma$.

In the present article we study the stability issue in the inverse problem of determining the potential $q$ from 
the asymptotic behavior with respect to $k$, of the so-called boundary spectral data, 
$$\bsd(A_q):=\{(\lambda_k,\psi_k),\ k \geq 1\}. $$

\subsection{State of the art}
The study of mathematical inverse spectral problems has a long story which dates back to, at least, 1929 and Ambarsumian's pionneering article \cite{A}, where the author proved that real-valued potential $q$ of the Sturm-Liouville operator $A_q=-\partial_x^2+q$ acting in $L^2(0,2\pi)$ is zero if and only if the spectrum of the periodic realization of $A_q$ equals $\{k^2,\ k \in \N\}$. However the first breakthrough results in this field appeared between 1945 and 1951, when Borg \cite{Bo}, Levinson \cite{L}, and Gel'fand and Levitan \cite{GL} identified the electric potential of $A_q$ through knowledge of the spectrum and additional spectral data.

Almost 40 years later, in \cite{NSU}, Nachman, Sylvester and Uhlmann extended Gel'fand and Levitan's result to multidimensional Schrödinger operators. They showed that full knowledge of the boundary spectral data uniquely determines the electric potential. 
Later on, in \cite{I}, Isozaki showed that the identification result of Nachman, Sylvester and Uhlmann is still valid when finitely many eigenpairs 
$(\lambda_n,\psi_n)$ remain unknown. This result, which is often referred to as the incomplete Borg-Levinson theorem, was extended by Canuto and Kavian to the case of Schrödinger operators in the divergence form $-\rho \nabla \cdot c \nabla  + q$, when the conductivity $c$ and the density $\rho$ satisfy $\min(c,\rho) \ge \epsilon$ for some $\epsilon>0$. More precisely, they showed in \cite{CK1,CK2} that the boundary spectral data uniquely determine two out the three functions $(c,\rho,q)$.

In 2013, further downsizing the data, Choulli and Stefanov established in \cite{CS} that the electric potential can be retrieved from asymptotic knowledge of the boundary spectral data. Namely, the authors proved that two potentials are equal whenever their boundary spectral data are sufficiently close asymptotically. This result was improved by \cite{KKS,S} upon weakening the condition imposed on the asymptotics of the boundary spectral data.
% and then it was extended to magnetic Schr\"odinger operators in \cite{Ki1} and to Riemannian manifolds in \cite{BCFKS}.   

The stability issue for the inverse problem of determining the electric potential from the full boundary spectral data was first treated in by Alessandrini, Sylvester and Sun in \cite{ASS}. Their result was adapted to local Neumann data in \cite{BCY} and to asymptotic boundary spectral data in \cite {CS,KKS,S}. 

All the above results were obtained for bounded electric potentials. As for the mathematical literature on inverse spectral problems with singular potentials, it seems to be quite sparse. The first result on this topic was published in \cite{PS} by P\"avarinta and Serov, who showed that knowledge of the full boundary spectral data uniquely determines the electric potential in $L^p(\Omega, \mathbb{R})$, provided that $p >d/2$. Later on, in \cite{P}, Pohjola showed unique determination of $q \in L^{d/2}(\Omega,\mathbb{R})$ from either full boundary spectral data when $d = 3$ or incomplete boundary spectral data when $d \geq 4$, and of $q \in L^{p}(\Omega,\mathbb{R})$ with $p >d/2$ and $d = 3$, from incomplete boundary spectral data. More recently, it was proved in \cite{BKMS} that $q \in L^{\max(2, 3 d \slash 5)}(\Omega)$ is uniquely determined by the asymptotic boundary spectral data. The three above mentioned papers are only concerned with the uniqueness problem and as far as we know, there is no result available in the mathematical literature dealing with the stability issue in the inverse problem of determining a singular potential from knowledge of the boundary spectral data.

%in \cite{Bel92,KK,KKL,KOM,Lassas2010}, inverse spectral problems  for the study of this problem by mean of the boundary control method where the inverse spectral problem is reduced into  an inverse hyperbolic problem.

\subsection{Stability estimate}

The main achievement of this article is the following stability estimate. 

\begin{theorem}
\label{t1}
For $j=1,2$, let $q_j\in \cQ_{c_0}(M)$, where $c_0>0$ and $M>0$ are fixed, and denote by $\{(\lambda_{j,k},\psi_{j,k}),\ k \geq 1 \}$ the boundary spectral data of the operator $A_{q_j}$. Assume that
\begin{equation}
\label{t1a}  \sum_{k=1}^{+ \infty} \norm{\psi_{1,k}-\psi_{2,k}}_{L^2(\Gamma)}^2 <+\infty.
\end{equation}
Then, there exists a positive constant $C$, depending only on $\Omega$, $c_0$ and $M$, such that we have
\begin{equation}
\label{se}
\norm{q_1-q_2}_{H^{-1}(\Omega)} \leq C \left( \limsup_{k \to +\infty} \abs{\lambda_{1,k}-\lambda_{2,k}} \right)^{\frac{1}{d+2}}.
\end{equation}
%for some positive constant $C$ depending only on $\Omega$, $c_0$ and $M$.
\end{theorem}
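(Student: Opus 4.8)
The plan is to reduce the asymptotic spectral information to a full Dirichlet-to-Neumann (DtN) comparison and then invoke a stability estimate for the Calderón problem with singular potentials. Concretely, for $\lambda$ in a common resolvent set of $A_{q_1}$ and $A_{q_2}$ (e.g. $\lambda < -c_0$), the solution operator of $(-\Delta + q_j - \lambda)u = 0$ in $\Omega$, $u = f$ on $\Gamma$, produces a DtN map $\Lambda_{q_j}(\lambda)$, and one has the spectral representation
\begin{equation}
\label{pp1}
\langle (\Lambda_{q_1}(\lambda) - \Lambda_{q_2}(\lambda)) f, g \rangle = \sum_{k \ge 1} \left( \frac{\langle f, \psi_{1,k}\rangle \langle \psi_{1,k}, g\rangle}{\lambda_{1,k} - \lambda} - \frac{\langle f, \psi_{2,k}\rangle \langle \psi_{2,k}, g\rangle}{\lambda_{2,k} - \lambda} \right),
\end{equation}
valid (after the usual care about convergence of the series, as in \cite{NSU,PS,BKMS}) since the $\psi_{j,k}$ form the relevant boundary spectral data. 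The first step is therefore to quantify, using hypothesis \eqref{t1a} together with a Weyl-type lower bound $\lambda_{j,k} \gtrsim k^{2/d}$ and the a priori bound $\sum_k \|\psi_{j,k}\|^2_{L^2(\Gamma)} / \lambda_{j,k}^2 < \infty$, how close $\Lambda_{q_1}(\lambda)$ and $\Lambda_{q_2}(\lambda)$ are: splitting the series in \eqref{pp1} at a cutoff index $N$, the tail is controlled by $\bigl(\sum_{k>N}\|\psi_{1,k}-\psi_{2,k}\|^2\bigr)^{1/2}$ plus terms of order $|\lambda_{1,k}-\lambda_{2,k}|/\lambda_{j,k}^2$, while the finite head of size $N$ is controlled by $N \cdot \delta$ where $\delta := \limsup_k |\lambda_{1,k}-\lambda_{2,k}|$ (up to a fixed finite error that the asymptotics allow us to absorb). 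Optimizing over $N$ and over $\lambda \to -\infty$ yields a bound of the form $\|\Lambda_{q_1}(\lambda)-\Lambda_{q_2}(\lambda)\| \le C\,\delta^{\theta}$ for a suitable power $\theta$, in an appropriate operator norm between boundary Sobolev spaces.

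The second step is to run the standard complex geometric optics (CGO) machinery for the Calderón problem in the $L^p$-potential setting. One constructs, for each $\xi \in \R^d$ and large parameter $|\zeta_j| \to \infty$ with $\zeta_1 + \zeta_2 = \xi$ and $\zeta_j \cdot \zeta_j = 0$ shifted to account for $\lambda$, CGO solutions $u_j = e^{i \zeta_j \cdot x}(1 + r_j)$ of $(-\Delta + q_j - \lambda)u_j = 0$, with the remainder estimate $\|r_j\|_{L^{2^*}} \lesssim |\zeta_j|^{-1}\|q_j\|_{L^{\max(2,3d/5)}}$ coming from the Sylvester--Uhlmann / Kenig--Ruiz--Sogge resolvent estimates adapted to the exponent $\max(2,3d/5)$ — this is precisely where the integrability threshold $q \in L^{\max(2,3d/5)}(\Omega)$ is needed, as in \cite{BKMS}. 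Integrating the standard identity
\begin{equation}
\label{pp2}
\int_\Omega (q_1 - q_2)\, u_1 u_2 \, dx = \langle (\Lambda_{q_1}(\lambda) - \Lambda_{q_2}(\lambda)) (u_1)_{|\Gamma}, (u_2)_{|\Gamma} \rangle
\end{equation}
against the CGO solutions, and letting $|\zeta_j|$ be tuned to $\|\Lambda_{q_1}(\lambda)-\Lambda_{q_2}(\lambda)\|$, one extracts the Fourier transform $\widehat{q_1 - q_2}(\xi)$ with an error that decays in $|\zeta_j|$ and grows in $|\xi|$; balancing these and integrating $|\xi|^{-2}|\widehat{q_1-q_2}(\xi)|^2$ over $\R^d$ produces the $H^{-1}(\Omega)$ norm on the left of \eqref{se}, with the exponent $\tfrac{1}{d+2}$ emerging from the optimization (large-frequency truncation at scale $\sim (\log(1/\|\Lambda_1-\Lambda_2\|))$ or a power thereof, matched against the polynomial remainder bounds — the precise bookkeeping gives $d+2$ in the denominator).

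I expect the main obstacle to be the first step: extracting a genuinely \emph{quantitative} closeness of the DtN maps from purely asymptotic eigenvalue data under only the summability hypothesis \eqref{t1a}. One must handle the non-absolute convergence of \eqref{pp1} carefully, control the contribution of possibly mismatched multiplicities and the finitely many "bad" low eigenvalues (which the $\limsup$ does not see), and choose the cutoff $N$ and the spectral parameter $\lambda$ in tandem so that the head and tail are simultaneously small; any inefficiency here degrades the final Hölder exponent. The CGO part, by contrast, is essentially the $L^p$-Calderón stability argument already available in the literature, needing only the resolvent estimates valid down to the exponent $\max(2,3d/5)$ established in \cite{BKMS}, so it should go through with only technical modifications.
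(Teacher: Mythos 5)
Your overall strategy --- first extract a quantitative bound $\norm{\Lambda_{q_1}(\lambda)-\Lambda_{q_2}(\lambda)}\le C\delta^{\theta}$ with $\delta:=\limsup_{k}\abs{\lambda_{1,k}-\lambda_{2,k}}$, then feed it into an $L^p$-Calder\'on stability argument --- has a genuine gap at exactly the point you flag. The hypotheses provide no smallness whatsoever for the low-lying data: for $k$ below any cutoff $N$, $\abs{\lambda_{1,k}-\lambda_{2,k}}$ is only bounded (the limsup says nothing about finitely many terms), and $\norm{\psi_{1,k}-\psi_{2,k}}_{L^2(\Gamma)}$ is merely finite, since \eqref{t1a} is a finiteness, not a smallness, assumption. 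Hence the head of your spectral series for the DtN difference contributes an error that is finite but in no way controlled by $\delta$, and it cannot be ``absorbed'': at fixed $\lambda$ your splitting gives no bound of the form $C\delta^\theta$, and along $\lambda\to-\infty$ the DtN difference tends to zero for \emph{any} pair of potentials (this is Lemma \ref{lemma 2.2}), so that regime by itself carries no information. The paper never bounds a DtN difference. The mechanism that kills the low modes is Isozaki's complex spectral parameter $\lambda_\tau^+=(\tau+i)^2$: after subtracting a reference value $\mu\to-\infty$ and using the representation \eqref{l5a}, each individual term $A_k^*(\tau)$, $B_k^*(\tau)$, $C_k^*(\tau)$ tends to $0$ as $\tau\to+\infty$ because $\im\,\lambda_\tau^+=2\tau$; therefore, for every fixed $N$, only the tail $k\ge N$ survives the limit, giving \eqref{limsup}, and letting $N\to\infty$ produces a bound by $\delta$ alone. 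Your plan has no analogous mechanism, and this is not a technicality one can patch by choosing $N$ and $\lambda$ ``in tandem''.

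The second step is also off in a way that matters for the exponent. CGO solutions have boundary values growing exponentially in the large parameter, so pairing them with a DtN-difference bound yields, after the truncation you describe, at best logarithmic stability; asserting that the bookkeeping ``gives $d+2$'' hides this. In the paper the test functions $f_\tau^\pm(x)=e^{i(\tau\pm i)\eta_\tau^\pm\cdot x}$ satisfy $\abs{f_\tau^\pm(x)}\le e^{\abs{x}}$ uniformly in $\tau$ and $\xi$, so Isozaki's formula (Proposition \ref{p1}) combined with \eqref{limsup} gives $\abs{\widehat{q_1-q_2}(\xi)}\le C\delta$ \emph{uniformly in} $\xi\in\R^d$; the H\"older rate in \eqref{se} then follows from an elementary frequency splitting of the $H^{-1}$ norm, $\norm{q_1-q_2}_{H^{-1}(\Omega)}\le C\para{r^{d/2}\delta+r^{-1}}$ optimized in $r$, with no Calder\'on-type machinery at all. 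If you insist on your two-step architecture, step 1 would have to be reformulated as an asymptotic statement in a large complex spectral parameter --- at which point you have essentially reconstructed the Isozaki representation the paper uses.
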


Theorem \ref{t1} is the main novelty of this work, as we are not aware of any stability result available in the mathematical literature, dealing with the determination of singular potentials by asymptotic boundary spectral data. It is proved in \cite{KKS,S} that the potential depends continuously on asymptotic boundary spectral data but this is for bounded potentials only.
%and that in this case the dependence of the bounded potential on the asymptotics of the eigenvalues is Lipschitz continuous.  

Evidently, \eqref{se} yields unique determination of $q \in \mathcal{Q}_{c_0}(M)$ by asymptotic knowledge of the spectrum of $A_q$, in the sense that the following implication
$$ \limsup_{k \to +\infty} \abs{\lambda_{1,k}-\lambda_{2,k}}=0 \Longrightarrow q_1=q_2, $$
which was already established in \cite[Theorem 1.1]{BKMS}, holds under the conditions of Theorem \ref{t1}.

Notice that the stability inequality \eqref{se} involves only the asymptotic distance between the eigenvalues of the operators $A_{q_j}$, $j=1,2$, and does not need explicitly any quantitative information about $\norm{\psi_{1,k}-\psi_{2,k}}_{L^2(\Gamma)}$. This might seem a little bit surprising at first sight since distinct iso-spectral potentials $q_1$ and $q_2$ can be built on certain domains $\Omega$. But for such potentials the condition \eqref{t1a} is not fulfilled as one has $\sum_{n=1}^{+\infty} \norm{\psi_{1,k}-\psi_{2,k}}_{L^2(\Gamma)}^2=+\infty$.

In the same spirit, it is worth mentioning that Theorem \ref{t1} does not assume that $q_1-q_2 \in L^\infty(\Omega)$. As a matter of fact it seems very likely that we have $\limsup_{k \to +\infty} \abs{\lambda_{1,k}-\lambda_{2,k}}=+\infty$, in which case \eqref{se} is trivially satisfied, when $q_1-q_2$ is unbounded. However, we are not able to prove rigorously this claim (neither can we guarantee that it is true) although the converse implication is obvious.
%There are two main ingredients in the proof of Theorem \ref{t1}. The first one is a suitable $L^2(\Gamma)$-%estimate of the normal derivative of any $H^1(\Omega)$-solution to the Laplace equation. It is given in Proposition %\ref{pr1} and generalizes a classical elliptic regularity result to the case
%of $L^{3 d \slash 5}(\Omega)$-potentials. The second one is an adaptation of Isozaki's representation formula, %presented in \cite{I}, to the framework of singular potentials lying in $\cQ_{c_0}(M)$.

\subsection{Structure of the article}
The paper is organized as follows. In Section \ref{sec-Isozaki} we extend the celebrated Isozaki's representation formula, which was initially established for bounded potentials, to the case of unbounded potentials. In Section \ref{sec-proof} we prove Theorem \ref{t1} by mean of Isozaki's formula. Finally, in the appendix, we collect several technical results that are needed for the proof of Theorem \ref{t1}.

\section{Isozaki's asymptotic formula}
\label{sec-Isozaki}
The goal of this section, which is inspired from \cite[Section 3]{BKMS}, is to relate the Fourier transform of $q_1-q_2$ to $\bsd(q_j)$, $j=1,2$, by triggering the boundary value problem \eqref{1} with suitable Dirichlet  
boundary data $f$. This approach was introduced by Isozaki in \cite{I}, and
since then it has been successfully applied by numerous authors to the analysis of 
multidimensional inverse spectral problems, see e.g., \cite{BCFKS,CS,KKS,Ki1,P,S}.

\subsubsection{A sufficiently rich set of test functions}
Let $\xi \in \mathbb{R}^d$ and set $\lambda_\tau^\pm = (\tau \pm i)^2$ for all $\tau \ge \abs{\xi}$. We seek two functions $f_\tau^\pm$ such that
\begin{equation}
\label{3.36}
( -\Delta -\lambda_{\tau}^\pm)f_\tau^\pm=0\ \mbox{in}\ \Omega
\end{equation}
and satisfying
\begin{equation}
\label{2}
\lim_{\tau \rightarrow +\infty} f_\tau^+(x) \overline{f_\tau^-(x)}=e^{-i\xi \cdot x},\; x\in \Omega,
\end{equation}
\begin{equation}
\label{3000}
    \sup_{\tau \ge \abs{\xi}} \norm{f_\tau^\pm}_{L^\infty(\Omega)} <+\infty.
\end{equation}
Pick $\eta \in \mathbb{S}^{n-1}$ such that $\xi \cdot \eta=0$, and for $\tau  \ge \abs{\xi}$, put
$$\beta_\tau := \sqrt{1-\frac{|\xi|^2}{4\tau^2}}\ \mbox{and}\ \eta_\tau^\pm := \beta_\tau \eta \mp \frac{\xi}{2\tau}$$ 
in such a way that $\abs{\eta_\tau^\pm}=1$. Then, it is apparent that
$$ f_\tau^\pm (x) := e^{i(\tau\pm i)\eta^{\pm}_{\tau} \cdot x},\; x\in \Omega,$$
fulfill \eqref{3.36} and \eqref{2}. Moreover, we have
$\abs{f_\tau^\pm (x)} \leq e^{\abs{x}}$ for all $x \in \overline{\Omega}$,  and hence
\begin{equation}
\label{3001}
\norm{f_\tau^\pm}_{L^r(X)} \leq \abs{X}^{1/r}  \sup_{x \in \overline{\Omega}} e^{\abs{x}}:=C_{r,X},\ X=\Omega,\Gamma,
\end{equation}
whenever $r \in [2,+\infty)$ or $r=+\infty$. More specifically, we notice that \eqref{3001} with $(r,X)=(+\infty,\Omega)$ yields \eqref{3000}.

Let $q \in \cQ_{c_0}(M)$, where $c_0>0$ and $M>0$ are fixed. Then, for all $\tau \geq \abs{\xi}$ we have $q f_\tau^\pm \in L^2(\Omega)$ by \eqref{3001}, and the estimate
$\norm{q f_\tau^\pm}_{L^2(\Omega)} \leq \norm{q}_{L^{2}(\Omega)} \norm{f_\tau^\pm}_{L^\infty(\Omega)} \leq M C_{+\infty,\Omega}$ when $d = 3$,
whereas
$\norm{q f_\tau^\pm}_{L^2(\Omega)} \leq \norm{q}_{L^{3 d \slash 5}(\Omega)} \norm{f_\tau^\pm}_{L^\frac{d}{d-2}(\Omega)} 
\leq M C_{\frac{d}{d-2},\Omega}$ when $d \geq 4$. Therefore, we have
\begin{equation}
\label{tm1}
\norm{q f_\tau^\pm}_{L^2(\Omega)} \leq C,
\end{equation}
for some positive constant $C$ which is independent of $\tau$. 

\subsubsection{Triggering the system with $f_\tau^\pm$}

For $j=1,2$, let $q_j \in \cQ_{c_0}(M)$, $z\in \mathbb{C}\setminus [-c_0,+\infty)$, and denote by $u_{j,z}^{\pm}$ the 
$W^{2,p}(\Omega)$-solution to the boundary value problem
\begin{equation}
\label{tm0}
 \left\{
\begin{array}{ll}
( -\Delta +{{q_j}}-z)u = 0  &\text{in}\ \Omega,\\
u = f_{\tau}^\pm & \text{on}\ \Gamma.
 \end{array}
\right. 
\end{equation}
Since $( -\Delta +{{q_j}}-z)f^\pm_\tau =({{q_j}}+\lambda_{\tau}^\pm -z) f^\pm_\tau$ from \eqref{3.36}, the function
\begin{equation}
\label{vz1}
v_{j,z}^\pm := u_{j,z}^\pm-f_\tau^\pm
\end{equation}
solves
 $$
 \left\{
\begin{array}{ll}
( -\Delta +{{q_j}}-z) v = -( -\Delta +{{q_j}}-z)f^\pm_\tau  & \text{in}\ \Omega\\
v= 0 & \text{on}\ \Gamma,
 \end{array}
\right. 
$$
and consequently we have
\begin{equation}
\label{vz2}
v^\pm_{j,z}=-(A_{q_j}-z)^{-1}(q_j+\lambda^\pm_\tau-z)f_\tau^\pm. 
\end{equation}
In the special case where $z=\lambda_\tau^\pm$, the above identity reads $v_{j,\lambda_\tau^\pm}^\pm = -(A_{q_j}-\lambda_{\tau}^\pm)^{-1} (q_j f_\tau^\pm)$. Since $\im\ \lambda_\tau^\pm =\pm 2 \tau$, we infer from \eqref{tm1} that
\begin{equation}
\label{vz3}
\norm{v^\pm_{j,\lambda_\tau^\pm}}_{L^2(\Omega)} \leq C \tau^{-1},\ \tau \ge \abs{\xi},
\end{equation}
where we recall that the constant $C$ is independent of $\tau$. From this, \eqref{tm1} and the Cauchy-Schwarz inequality, it then follows that 
$\abs{\int_\Omega q_j v_{j,\lambda_\tau^+}^+ \overline{f_\tau^-} dx} \leq C^2 \tau^{-1}$, and consequently we have
\begin{equation}
\label{l4a}
\lim_{\tau\to+\infty} \int_\Omega q_j v_{j,\lambda_\tau^+}^+ \overline{f_\tau^-} dx=0,\ j=1,2.
\end{equation}
Armed with \eqref{l4a}, we are now in position to establish the Isozaki formula for the unbounded potentials $q_j$, $j=1,2$. 

\subsubsection{Isozaki's asymptotic representation formula}
For $\tau \geq \abs{\xi}$, put
\begin{equation}
\label{2.16}
S_{j,\tau} := \langle \partial_\nu u_{j,\lambda_\tau^+}^+ ,f_\tau^- \rangle_{L^2(\Gamma)},\ j=1,2,
\end{equation}
and recall from \eqref{vz1}-\eqref{vz2} that $u_{j,\lambda_\tau^+}^+=f_\tau^+ + v_{j,\lambda_\tau^+}^+$ and $v_{j,\lambda_\tau^+}^+=-(A_{q_j}-\lambda_\tau^+)^{-1} (q_j f_\tau^+)$.
Since $v_{j,\lambda_\tau^+}^+ \in D(A_{q_j})$, we have $\partial_\nu u_{j,\lambda_\tau^+}^+ \in L^2(\Gamma)$ from Appendix \ref{sec-L2r}, and hence $S_{j,\tau}$ is well-defined.

The following identity extends the classical Isozaki formula established in \cite{I} for bounded potentials, to possibly
unbounded potentials lying in $\mathcal{Q}_{c_0}(M)$. Its proof can be found in \cite[Proposition 3.1]{BKMS} but, for the sake of completeness and for the convenience of the reader, we provide it below.

\begin{proposition}
\label{p1}
Let $q_j \in \cQ_{c_0}(M)$, $j=1,2$. Then, for all $\xi \in \mathbb{R}^d$, we have
$$\lim_{\tau \rightarrow +\infty} (S_{1,\tau}-S_{2,\tau}) =\int_\Omega (q_1-q_2) e^{-i\xi\cdot x} dx.$$
\end{proposition}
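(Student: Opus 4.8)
The plan is to turn the boundary pairing $S_{j,\tau}$ defined in \eqref{2.16} into a volume integral over $\Omega$ by means of a Green-type identity, and then to pass to the limit $\tau \to +\infty$ using the pointwise convergence \eqref{2}, the uniform bound \eqref{3001}, and the vanishing property \eqref{l4a}.

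First I would exploit the splitting \eqref{vz1}, which gives $u_{j,\lambda_\tau^+}^+ = f_\tau^+ + v_{j,\lambda_\tau^+}^+$, whence $\partial_\nu u_{j,\lambda_\tau^+}^+ = \partial_\nu f_\tau^+ + \partial_\nu v_{j,\lambda_\tau^+}^+$, the normal trace of $v_{j,\lambda_\tau^+}^+$, which lies in $D(A_{q_j})$, being meaningful in $L^2(\Gamma)$ by Appendix \ref{sec-L2r}. Since $f_\tau^+$ is smooth and independent of $j$, the contribution $\langle \partial_\nu f_\tau^+, f_\tau^- \rangle_{L^2(\Gamma)}$ cancels in the difference, so that
\[
S_{1,\tau} - S_{2,\tau} = \langle \partial_\nu v_{1,\lambda_\tau^+}^+, f_\tau^- \rangle_{L^2(\Gamma)} - \langle \partial_\nu v_{2,\lambda_\tau^+}^+, f_\tau^- \rangle_{L^2(\Gamma)} .
\]
Then, for each $j$, I would evaluate $\langle \partial_\nu v_{j,\lambda_\tau^+}^+, f_\tau^- \rangle_{L^2(\Gamma)}$ by applying Green's second identity to the pair $\bigl( v_{j,\lambda_\tau^+}^+, \overline{f_\tau^-} \bigr)$, using three facts: that $v_{j,\lambda_\tau^+}^+$ vanishes on $\Gamma$; that $(-\Delta + q_j - \lambda_\tau^+) v_{j,\lambda_\tau^+}^+ = -q_j f_\tau^+$, which is the differential form of the identity $v_{j,\lambda_\tau^+}^+ = -(A_{q_j} - \lambda_\tau^+)^{-1}(q_j f_\tau^+)$ obtained from \eqref{vz2}; and that $(-\Delta - \lambda_\tau^+)\overline{f_\tau^-} = 0$, which follows from \eqref{3.36} upon complex conjugation together with the relation $\lambda_\tau^- = \overline{\lambda_\tau^+}$. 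After the terms carrying the factor $\lambda_\tau^+$ cancel, this leaves
\[
\langle \partial_\nu v_{j,\lambda_\tau^+}^+, f_\tau^- \rangle_{L^2(\Gamma)} = \int_\Omega q_j v_{j,\lambda_\tau^+}^+ \overline{f_\tau^-}\, dx + \int_\Omega q_j f_\tau^+ \overline{f_\tau^-}\, dx ,
\]
and hence
\[
S_{1,\tau} - S_{2,\tau} = \int_\Omega (q_1 - q_2) f_\tau^+ \overline{f_\tau^-}\, dx + \int_\Omega q_1 v_{1,\lambda_\tau^+}^+ \overline{f_\tau^-}\, dx - \int_\Omega q_2 v_{2,\lambda_\tau^+}^+ \overline{f_\tau^-}\, dx .
\]

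It then remains to let $\tau \to +\infty$ in the last identity. The two integrals involving $v_{j,\lambda_\tau^+}^+$ tend to $0$ by \eqref{l4a} (used with $j = 1$ and $j = 2$). For the first integral I would invoke dominated convergence: by \eqref{2} the integrand converges pointwise on $\Omega$ to $(q_1 - q_2) e^{-i\xi \cdot x}$, by \eqref{3001} it is dominated by $\abs{q_1 - q_2} \sup_{x \in \overline{\Omega}} e^{2 \abs{x}}$, and $q_1 - q_2 \in L^1(\Omega)$ because $\Omega$ is bounded and $q_j \in L^{\max(2, 3 d \slash 5)}(\Omega)$. Consequently $\int_\Omega (q_1 - q_2) f_\tau^+ \overline{f_\tau^-}\, dx \to \int_\Omega (q_1 - q_2) e^{-i\xi \cdot x}\, dx$, which is the asserted formula.

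The delicate point is the Green identity used in the second step. Since $q_j$ merely belongs to $L^{\max(2, 3 d \slash 5)}(\Omega)$, the function $v_{j,\lambda_\tau^+}^+$ lies only in some $W^{2,p}(\Omega)$ and its normal derivative is the $L^2(\Gamma)$-object constructed in Appendix \ref{sec-L2r} rather than a classical trace, so the integration by parts cannot be taken for granted. I would justify it through the same regularization argument that underlies the Appendix construction of $\partial_\nu v_{j,\lambda_\tau^+}^+$ — approximating $q_j$ and $v_{j,\lambda_\tau^+}^+$ by smooth data, for which Green's formula is classical, and then passing to the limit — while checking that each integrand lies in $L^1(\Omega)$ by H\"older's inequality and \eqref{3001}, exactly as in the bounds \eqref{tm1}--\eqref{vz3}.
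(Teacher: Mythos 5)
Your proposal is correct and takes essentially the same route as the paper: the paper applies Green's formula to $u_{j,\lambda_\tau^+}^+$ paired with $\overline{f_\tau^-}$ and lets the common boundary term cancel in the difference, arriving at exactly your intermediate identity $S_{1,\tau}-S_{2,\tau}=\int_\Omega(q_1-q_2)f_\tau^+\overline{f_\tau^-}\,dx+\int_\Omega q_1 v_{1,\lambda_\tau^+}^+\overline{f_\tau^-}\,dx-\int_\Omega q_2 v_{2,\lambda_\tau^+}^+\overline{f_\tau^-}\,dx$, and then concludes via \eqref{l4a}, \eqref{2} and dominated convergence with $q_1-q_2\in L^1(\Omega)$, just as you do. Your choice to integrate by parts against $v_{j,\lambda_\tau^+}^+$ (which vanishes on $\Gamma$) rather than against $u_{j,\lambda_\tau^+}^+$ is only an equivalent bookkeeping of the same computation.
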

\begin{proof}
For $j=1,2$, we have
\begin{equation} 
\label{20}
 \left\{
\begin{array}{ll}
( -\Delta +{{q_j}}-\lambda_\tau^+)u_{j,\lambda_\tau^+}^+ = 0  & \text{in}\ \Omega\\
u_{j,\lambda_\tau^+}^+ = f_{\tau}^+ & \text{on}\ \Gamma,
 \end{array}
\right. 
\end{equation}
hence by multiplying the first line of \eqref{20} by $\overline{f_\tau^-}$, integrating on $\Omega$ and applying the Green formula, we obtain that
\begin{eqnarray*}
0 &= & \int_\Omega (-\Delta+q_j-\lambda_\tau^+)u^+_{j,\lambda_\tau^+}\overline{f_\tau^- (x)} dx \\
& = & \int_\Gamma f_\tau^+ \overline{ \partial_\nu f_\tau^-} d\sigma-\int_\Gamma (\partial_\nu u_{j,\lambda_{\tau}^+}^+) \overline{ f_\tau^-} d\sigma + \int_\Omega u_{j,\lambda_{\tau}^+}^+ \overline{ (-\Delta+q_j-\lambda_\tau^-)f_\tau^-} dx \\
& = & \int_\Gamma f_\tau^+ \overline{ \partial_\nu f_\tau^-} d\sigma - S_{j,\tau} + \int_\Omega u_{j,\lambda_{\tau}^+}^+ q_j \overline{f_\tau^-} dx,\ j=1,2.
\end{eqnarray*}
Here, we used \eqref{3.36} and \eqref{2.16} in the last line.
Thus, we have
$$
S_{1,\tau}-S_{2,\tau}=\int_\Omega \big(q_1u_{1,\lambda_{\tau}^+}^+ - q_2 u_{2,\lambda_{\tau}^+}^+\big)\overline{f_\tau^-} dx.
$$
This and $u_{j,\lambda_\tau^+}^+=f_\tau^+ + v_{j,\lambda_\tau^+}^+$, $j=1,2$, then yield
$$
S_{1,\tau} -S_{2,\tau} = \int_\Omega (q_1-q_2) f_\tau^+ \overline{f_\tau^-} dx + \int_\Omega q_1 v_{1,\lambda_{\tau}^+}^+ \overline{f_\tau^-} dx - \int_\Omega q_2 
v_{2,\lambda_{\tau}^+}^+ \overline{f_\tau^-} dx.
$$
Taking the limit as $\tau \to +\infty$ in the above identity and using \eqref{l4a}, we get that
\begin{equation}
\label{tm3}
\lim_{\tau\to+\infty} \left( S_{1,\tau}-S_{2,\tau} - \int_\Omega (q_1-q_2) f_\tau^+ \overline{f_\tau^-} dx \right)=0,
\end{equation}
and since $q_1-q_2 \in L^1(\Omega)$, we have
$\lim_{\tau\to+\infty} \int_\Omega (q_1-q_2) f_\tau^+ \overline{f_\tau^-} dx=\int_\Omega (q_1-q_2) e^{-i \xi \cdot x} dx$ 
by \eqref{2} and the dominated convergence theorem. Finally, this and \eqref{tm3} entail \eqref{20}.  
\end{proof}

\section{Proof of Theorem \ref{t1}} 
\label{sec-proof}

Since the stability estimate \eqref{se} is obviously satisfied when $\limsup_{k \to +\infty} \abs{\lambda_{1,k}-\lambda_{2,k}}=+\infty$, we assume without loss of generality in the sequel that $\limsup_{k \to +\infty} \abs{\lambda_{1,k}-\lambda_{2,k}}<+\infty$. As a consequence we have 
\begin{equation}
\label{tm4a}
\sup_{k\geq1} \abs{\lambda_{1,k}-\lambda_{2,k}} \leq C,
\end{equation}
for some positive constant $C$, and hence
\begin{equation}
\label{tm4b}
\abs{\lambda_{2,k}} \leq C(1+\abs{\lambda_{1,k}}),\ k \geq 1,
\end{equation}
upon possibly enlarging $C$. Here and in the remaining part of this proof, $C$ denotes a generic positive constant independent of $k$, which may change from line to line. 

The proof being quite lengthy, we split it into 7 steps.

\noindent {\it Step 1: Introducing an additional spectral parameter.}
We use the same notations as in Section \ref{sec-Isozaki}. Namely, for $z \in \mathbb{C}\setminus [-c_0,+\infty)$, $j=1,2$, we denote by $u^+_{j,z}$ the $W^{2,p}(\Omega)$-solution to the boundary value problem \eqref{tm0}. 
Since $q_j f_\tau^+ \in L^2(\Omega)$ by \eqref{tm1}, we have
$v_{j,z}^+=u_{j,z}^+-f_\tau^+ \in D(A_{q_j})$ from \eqref{vz1}-\eqref{vz2}. Therefore, $\partial_\nu u_{j,z}^+ \in L^2(\Gamma)$ according to Appendix \ref{sec-L2r}, and for all $\mu \in \mathbb{C} \setminus [-c_0,+\infty)$ the normal derivative
of $v^+_{j,\lambda_\tau^+,\mu} := u^+_{j,\lambda_\tau^+}-u^+_{j,\mu}$ lies in $L^2(\Gamma)$. Moreover, we have
\begin{eqnarray}
S_{1,\tau}-S_{2,\tau} 
%&= & \langle \Lambda_{1,\lambda^+_\tau} f_\tau^+ ,f_\tau^- \rangle_{L^2(\Gamma)} -\langle \Lambda_{2,\lambda^+_\tau} f_\tau^+ ,f_\tau^- \rangle_{L^2(\Gamma)}\cr
&= & \langle \partial_\nu u^+_{1,\lambda^+_\tau} -\partial_\nu u^+_{2,\lambda^+_\tau},f_\tau^- \rangle_{L^2(\Gamma)} \nonumber\\
& = & \langle \partial_\nu v^+_{1,\lambda^+_\tau,\mu} ,f_\tau^- \rangle_{L^2(\Gamma)}-\langle \partial_\nu v^+_{2,\lambda^+_\tau,\mu},f_\tau^- \rangle_{L^2(\Gamma)}+\langle \partial_\nu u^+_{1,\mu} -\partial_\nu u^+_{2,\mu},f_\tau^- \rangle_{L^2(\Gamma)}, \label{tm5}
\end{eqnarray}
from \eqref{2.16}. 
We first examine the last term on the right-hand-side of \eqref{tm5}. By H\"older's inequality, we have
$$ \abs{\langle \partial_\nu u^+_{1,\mu} -\partial_\nu u^+_{2,\mu},f_\tau^- \rangle_{L^2(\Gamma)}} \leq \norm{\partial_\nu u^+_{1,\mu} -\partial_\nu u^+_{2,\mu}}_{L^p(\Gamma)} \norm{f_\tau^-}_{L^{p^\prime}(\Gamma)},$$
where $p^\prime:=\frac{2d}{d-2}$ is the H\"older conjugate of $p$.
Thus, we have
$\lim_{\mu \to -\infty} \langle \partial_\nu u^+_{1,\mu} -\partial_\nu u^+_{2,\mu},f_\tau^- \rangle_{L^2(\Gamma)} = 0$ by Lemma \ref{lemma 2.2}, and hence
\begin{equation}
\label{t1c}
S_{1,\tau}-S_{2,\tau}  = 
\lim_{\mu \rightarrow -\infty} \langle \partial_\nu v^+_{1,\lambda^+_\tau,\mu}-\partial_\nu v^+_{2,\lambda^+_\tau,\mu} ,f_\tau^- \rangle_{L^2(\Gamma)},
\end{equation}
from \eqref{tm5}.

\noindent {\it Step 2: Decomposition.} The next step is to apply \eqref{l5a} on the right-hand side of \eqref{t1c}. We get through direct computation that 
\begin{equation}
\label{tm6}
\langle \partial_\nu v^+_{1,\lambda^+_\tau,\mu}-\partial_\nu v^+_{2,\lambda^+_\tau,\mu} ,f_\tau^- \rangle_{L^2(\Gamma)} 
%&=&
%\langle \partial_\nu u^+_{1,\lambda^+_\tau}-\partial_\nu u^+_{1,\mu} ,f_\tau^- \rangle_{L^2(\Gamma)}- \langle \partial_\nu u^+_{2,\lambda^+_\tau}-\partial_\nu u^+_{2,\mu} ,f_\tau^- \rangle_{L^2(\Gamma)} \\
% &=& \langle (\mu-\lambda^+_\tau) \sum_{k = 1}^\infty \frac{\langle f^+_\tau , \psi_{1,k}\rangle_{L^2(\Gamma)}}{(\lambda_\tau^+-\lambda_{1,k})(\mu - \lambda_{1,k})} \psi_{1,k} , f^-_\tau \rangle_{L^2(\Gamma)} \\
% - \langle (\mu-\lambda^+_\tau) \sum_{k = 1}^\infty \frac{\langle f^+_\tau , \psi_{2,k}\rangle_{L^2(\Gamma)}}{(\lambda_\tau^+-\lambda_{2,k})(\mu - \lambda_{2,k})} \psi_{2,k} , f^-_\tau \rangle_{L^2(\Gamma)} \\
%&=& \sum_{k = 1}^\infty \frac{(\mu-\lambda^+_\tau)}{(\lambda_\tau^+-\lambda_{1,k})(\mu - \lambda_{1,k})} \langle f_\tau^+, \psi_{1,k}\rangle_{L^2(\Gamma)} \overline{\langle f^-_\tau ,\psi_{1,k}} \rangle_{L^2(\Gamma)} \\
% -\sum_{k = 1}^\infty \frac{(\mu-\lambda^+_\tau)}{(\lambda_\tau^+-\lambda_{2,k})(\mu - \lambda_{2,k})} \langle f_\tau^+, \psi_{2,k}\rangle_{L^2(\Gamma)} \overline{\langle f^-_\tau ,\psi_{2,k}} \rangle_{L^2(\Gamma)} \\
% &=&
=\sum_{k = 1}^{+\infty} \left( A_k(\mu,\tau)+B_k(\mu,\tau)+C_k(\mu,\tau) \right),
\end{equation}
where
$$A_k(\mu,\tau):=\frac{\mu-\lambda^+_\tau}{(\lambda_\tau^+-\lambda_{1,k})(\mu - \lambda_{1,k})}
\langle f_\tau^+, \psi_{1,k}-\psi_{2,k}\rangle_{L^2(\Gamma)} \overline{\langle f^-_\tau ,\psi_{1,k} \rangle}_{L^2(\Gamma)},$$
$$B_k(\mu,\tau):=\frac{\mu-\lambda^+_\tau}{(\lambda_\tau^+-\lambda_{1,k})(\mu - \lambda_{1,k})} 
\langle f_\tau^+, \psi_{2,k}\rangle_{L^2(\Gamma)} \overline{\langle f^-_\tau ,\psi_{1,k}-\psi_{2,k}\rangle}_{L^2(\Gamma)}$$
and
$$C_k(\mu,\tau):=\left(\frac{\mu-\lambda^+_\tau}{(\lambda_\tau^+-\lambda_{1,k})(\mu - \lambda_{1,k})}-\frac{\mu-\lambda^+_\tau}{(\lambda_\tau^+-\lambda_{2,k})(\mu - \lambda_{2,k})}\right)\langle f_\tau^+, \psi_{2,k}\rangle_{L^2(\Gamma)}\overline{\langle f^-_\tau ,\psi_{2,k} \rangle}_{L^2(\Gamma)}.$$
%$$\begin{aligned}C_k(\mu,\tau):=&(\mu-\lambda^+_\tau)\left(\frac{1}{(\lambda_\tau^+-\lambda_{1,k})(\mu - \lambda_{1,k})}-\frac{1}{(\lambda_\tau^+-\lambda_{2,k})(\mu - \lambda_{2,k})}\right)\\
%&\times\langle f_\tau^+, \psi_{2,k}\rangle_{L^2(\Gamma)} \overline{\langle f^-_\tau ,\psi_{2,k} \rangle_{L^2(\Gamma)}}.\end{aligned}$$

\noindent {\it Step 3: Majorizing $A_k(\mu,\tau)$ and $B_k(\mu,\tau)$.}  Let us recall from \eqref{tm*} that $\norm{\psi_{j,k}}_{L^2(\Gamma)}\leq C \left(1+\abs{\lambda_{j,k}} \right)$ for $j=1,2$ and all $k \geq 1$, where $C>0$ depends only on $\Omega$ and $q_j$. Thus, with reference to \eqref{3001} with $r=2$ and $X=\Gamma$, we obtain that
$$
\abs{\langle f^\pm_\tau ,\psi_{j,k} \rangle_{L^2(\Gamma)}} \leq C \left(1+\abs{\lambda_{j,k}} \right),\ k \geq 1.
$$
This, \eqref{3001} and \eqref{tm4b} then yield for all $\mu \leq -(1+c)$ and all $\tau \geq 1 + \abs{\xi}$, that
\begin{equation}
\label{tm8}
\abs{A_k(\mu,\tau)} + \abs{B_k(\mu,\tau)} \leq C_\tau \norm{\psi_{1,k}-\psi_{2,k}}_{L^2(\Gamma)},\ k \geq 1.
\end{equation}
Here and below, $C_\tau$ denotes a generic positive constant possibly depending on $\tau$, which is independent of $k$ and $\mu$.

\noindent {\it Step 4: The case of $C_k(\mu,\tau)$.} We turn now to estimating $C_k(\mu,\tau)$. This can be made by rewriting
$\frac{\mu-\lambda^+_\tau}{(\lambda_\tau^+-\lambda_{1,k})(\mu - \lambda_{1,k})}-
\frac{\mu-\lambda^+_\tau}{(\lambda_\tau^+-\lambda_{2,k})(\mu - \lambda_{2,k})}$ as
$\frac{\lambda_{1,k}-\lambda_{2,k}}{(\lambda_\tau^+ - \lambda_{1,k})(\lambda_\tau^+ - \lambda_{2,k})}
- \frac{\lambda_{1,k}-\lambda_{2,k}}{(\mu - \lambda_{1,k})(\mu - \lambda_{2,k})}$ and using \eqref{tm4a}. We obtain that
% getting for all $\mu \leq -(1+c)$ and $\tau \geq 1 + \abs{\xi}$ that
\begin{equation}
\label{tm9a}
\abs{C_k(\mu,\tau)} \leq C \left( \left| \frac{\lambda_\tau^+-\lambda_{2,k}}{\lambda_\tau^+-\lambda_{1,k}}\right| \Phi_k(\lambda_\tau^+)+\left| \frac{\mu-\lambda_{2,k}}{\mu-\lambda_{1,k}}\right|\Phi_k(\mu) \right), %+ \left| \frac{\langle f^-_\tau ,\psi_{1,k} \rangle_{L^2(\Gamma)}}{\mu-\lambda_{1,k}} \right| \left| \frac{\langle f^+_\tau ,\psi_{2,k} %\rangle_{L^2(\Gamma)}}{\mu-\lambda_{2,k}} \right| \right),\ k \geq 1.
\end{equation}
where
\begin{equation}
\label{tm9b}
\Phi_k(z):=
\left| \frac{\langle f^-_\tau ,\psi_{2,k} \rangle_{L^2(\Gamma)}}{z-\lambda_{2,k}} \right| \left| \frac{\langle f^+_\tau ,\psi_{2,k} \rangle_{L^2(\Gamma)}}{z-\lambda_{2,k}} \right|,\ z \in \C \setminus [-c_0,+\infty).
\end{equation}
Notice from \eqref{tm4a} that
\begin{equation}
\label{tm14a}
\left| \frac{\lambda_\tau^+-\lambda_{2,k}}{\lambda_\tau^+-\lambda_{1,k}}\right| \leq 1+C,\ k \ge 1,
\end{equation}
whenever $\tau \ge 1$, and that 
\begin{equation}
\label{tm14b}
\left| \frac{\mu-\lambda_{2,k}}{\mu-\lambda_{1,k}}\right| \leq 1+C,\ k \ge 1,
\end{equation}
provided that $\mu \le -( 1 + c_0)$.
Further, bearing in mind  that $u_{j,z}^\pm$, $j=1,2$, is the solution to the boundary value problem \eqref{tm0}, we find upon multiplying by $\overline{\phi_{2,k}}$ the first 
line of \eqref{tm0} with $j=2$, integrating the result over $\Omega$ and applying the Green formula, that
$$\langle u_{2,z}^\pm , \phi_{j,k} \rangle_{L^2(\Omega)}
=\frac{\langle f_\tau^\pm, \psi_{2,k} \rangle_{L^2(\Gamma)}}{z - \lambda_{2,k}},\ k \geq 1.$$
Thus, 
\begin{equation}
\label{tt1}
\sum_{k=1}^{+\infty} \left| \frac{\langle f^\pm_\tau ,\psi_{2,k} \rangle_{L^2(\Gamma)}}{z-\lambda_{2,k}} \right|^2=
\norm{u_{2,z}^\pm}_{L^2(\Omega)}^2,
%= \norm{f^\pm_\tau-(A_{q_j}-\lambda_\tau^\pm)^{-1}(q_jf^\pm_\tau)}_{L^2(\Omega)}^2<\infty,\quad j=1,2.
\end{equation}
from the Parseval formula.
Moreover, we have 
\begin{equation}
\label{tm9d}
\norm{u_{2,\lambda_\tau^+}^\pm}_{L^2(\Omega)} \leq C,
\end{equation}
whenever $\tau \geq 1$, according to \eqref{3001} with $(r,X)=(2,\Omega)$, and to \eqref{vz1} and \eqref{vz3} with $j=2$. Similarly, since
$u_{2,\mu}^\pm=f_\tau^\pm-(A_{q_2}-\mu)^{-1}(q_2+\lambda_\tau^\pm-\mu) f_\tau^\pm$ from \eqref{vz1}-\eqref{vz2}, we have
\begin{eqnarray*}
\norm{u_{2,\mu}^\pm}_{L^2(\Omega)}
%& \leq & \norm{f_\tau^\pm}_{L^2(\Omega)} + \frac{\norm{(q_j+\lambda_\tau^\pm-\mu)f_\tau^\pm}_{L^2(\Omega)}}{\abs{\mu+c}} \\
& \leq &\norm{f_\tau^\pm}_{L^2(\Omega)} + 
\frac{\norm{q_2 f_\tau^\pm}_{L^2(\Omega)}+(\abs{\lambda_\tau^\pm} + \abs{\mu})\norm{f_\tau^\pm}_{L^2(\Omega)}}{\abs{\mu+c_0}}
\end{eqnarray*}
for $\mu \leq -(1+c_0)$, and consequently 
$\norm{u_{2,\mu}^\pm}_{L^2(\Omega)} \leq (2+\abs{\lambda_\tau^\pm}+c) \norm{f_\tau^\pm}_{L^2(\Omega)} + 
\norm{q_2 f_\tau^\pm}_{L^2(\Omega)}$. This, \eqref{3001} with $(r,X)=(2,\Omega)$ and
\eqref{tm1} entail that
\begin{equation}
\label{tm9c}
\norm{u_{2,\mu}^\pm}_{L^2(\Omega)} \leq C_\tau,\ \mu \leq (-1+c_0).
\end{equation}

\noindent{\it Step 5: Sending $\mu$ to $-\infty$.} With reference to \eqref{tm9a}, \eqref{tm9b}, \eqref{tm14a}, \eqref{tm14b}, \eqref{tt1}, \eqref{tm9c} and \eqref{tm9c}, and to \eqref{t1a} and \eqref{tm8}, it follows from \eqref{tm6} and the dominated convergence theorem that
\begin{equation}
\label{t1g}
S_{1,\tau}-S_{2,\tau}=\lim_{\mu \rightarrow -\infty} \langle \partial_\nu v^+_{1,\lambda^+_\tau,\mu}-\partial_\nu v^+_{2,\lambda^+_\tau,\mu} ,f_\tau^- \rangle_{L^2(\Gamma)}
=\sum_{k = 1}^{+\infty} \left( A_k^*(\tau)+B_k^*(\tau)+ C_k^* (\tau) \right),
\end{equation}
%\begin{equation}\label{t1g}\begin{aligned}S_\tau&=\lim_{\mu \rightarrow -\infty} \langle \partial_\nu v^+_{1,\lambda^+_\tau,\mu}-\partial_\nu v^+_{2,\lambda^+_\tau,\mu} ,f_\tau^- \rangle_{L^2(\Gamma)}\\
%&=\sum_{k = 1}^\infty A_k^*(\tau)+B_k^*(\tau)+C_k^*(\tau),\end{aligned}\end{equation}
where
\begin{equation}
\label{tm10}
A_k^*(\tau):=\frac{1}{\lambda_\tau^+-\lambda_{1,k}} 
\langle f_\tau^+, \psi_{1,k}-\psi_{2,k}\rangle_{L^2(\Gamma)} \overline{\langle f^-_\tau ,\psi_{1,k} \rangle}_{L^2(\Gamma)},
\end{equation}
\begin{equation}
\label{tm11}
B_k^*(\tau):=\frac{1}{\lambda_\tau^+-\lambda_{1,k}} \langle f_\tau^+, \psi_{2,k}\rangle_{L^2(\Gamma)} \overline{\langle f^-_\tau ,\psi_{1,k}-\psi_{2,k}\rangle}_{L^2(\Gamma)}
\end{equation}
and
\begin{equation}
\label{tm12}
C_k^*(\tau):=\frac{\lambda_{1,k}-\lambda_{2,k}}{(\lambda_\tau^+-\lambda_{1,k})(\lambda_\tau^+-\lambda_{2,k})}
\langle f_\tau^+, \psi_{2,k} \rangle_{L^2(\Gamma)} \overline{\langle f^-_\tau ,\psi_{2,k} \rangle}_{L^2(\Gamma)}.
\end{equation}

\noindent{\it Step 5: Sending $\tau$ to $+\infty$.} 
Since $\im (\lambda_\tau^+ - \lambda_{j,k})=2\tau$ for all $k \ge 1$, we deduce from \eqref{3001} that
$$
\abs{A_k^*(\tau)} + \abs{B_k^*(\tau)} \leq C \tau^{-1} \norm{\psi_{1,k}-\psi_{2,k}}_{L^2(\Gamma)} \left( \norm{\psi_{1,k}}_{L^2(\Gamma)} + \norm{\psi_{2,k}}_{L^2(\Gamma)} \right)
$$
%$$
%| B_k^*(\tau) |\leq c_*^2 \tau^{-1}\| \psi_{1,k}-\psi_{2,k} \|_{L^2(\Gamma)} \| \psi_{2,k}\|_{L^2(\Gamma)},
%$$
and
$$
\abs{C_k^*(\tau)} \leq C \tau^{-2} \abs{\lambda_{1,k}-\lambda_{2,k}}  \norm{\psi_{2,k}}_{L^2(\Gamma)}^2,
$$
where the positive constant $C$ is independent of $k$ and $\tau$.
As a consequence we have
$$
\lim_{\tau\to+\infty}A_k^*(\tau)=\lim_{\tau\to+\infty}B_k^*(\tau)=\lim_{\tau\to+\infty}C_k^*(\tau)=0,\ k \ge 1.
$$
This and \eqref{t1g} yield for any natural number $N$, that
\begin{equation}
\label{t1h}
\lim_{\tau\to+\infty} \abs{S_{1,\tau}-S_{2,\tau}} \leq \limsup_{\tau\to+\infty} \sum_{k = N}^{+\infty} \left( \abs{A_k^*(\tau)}+ \abs{B_k^*(\tau)}+ \abs{C_k^*(\tau)} \right).
\end{equation}
On the other hand, applying the Cauchy-Schwarz inequality in \eqref{tm10}, we get from \eqref{tt1} that
\begin{eqnarray}
\sum_{k = N}^{+\infty} 
\abs{A_k^*(\tau)} & \leq & \norm{f_\tau^+}_{L^2(\Omega)} \left( \sum_{k=1}^{+\infty} \left| \frac{\langle f^-_\tau ,\psi_{1,k} \rangle_{L^2(\Gamma)}}{\lambda_\tau^+-\lambda_{1,k}} \right|^2 \right)^{\frac{1}{2}} \left(\sum_{k=N}^{+\infty} \norm{\psi_{1,k}-\psi_{2,k}}_{L^2(\Gamma)}^2\right)^{\frac{1}{2}} \nonumber \\
&\leq &  \norm{f_\tau^+}_{L^2(\Omega)} \norm{u_{1,\lambda_\tau^+}^-}_{L^2(\Omega)}
\left(\sum_{k=N}^{+\infty} \norm{\psi_{1,k}-\psi_{2,k}}_{L^2(\Gamma)}^2\right)^{\frac{1}{2}}.
\label{tm13}
\end{eqnarray}
Further, since $\sup_{\tau \ge 1} \norm{f_\tau^+}_{L^2(\Omega)} \norm{u_{1,\lambda_\tau^+}^-}_{L^2(\Omega)}<+\infty$ according to
 \eqref{3001} with $(r,X)=(2,\Omega)$, \eqref{vz1} and \eqref{vz3}, it follows from \eqref{tm13} that
\begin{equation}
\label{t1i}
\limsup_{\tau\to+\infty}\sum_{k = N}^{+\infty} \abs{A_k^*(\tau)} \leq C \left( \sum_{k=N}^\infty\norm{\psi_{1,k}-\psi_{2,k}}_{L^2(\Gamma)}^2 \right)^{\frac{1}{2}},
\end{equation}
for some constant $C>0$ independent of $N$.

Similarly, by arguing as above with \eqref{tm11} and \eqref{tm12} instead of \eqref{tm10}, we find that 
$$
\limsup_{\tau\to+\infty}\sum_{k = N}^\infty \abs{B_k^*(\tau)}\leq C \left( \sum_{k=N}^{+\infty}\norm{\psi_{1,k}-\psi_{2,k}}_{L^2(\Gamma)}^2 \right)^{\frac{1}{2}}
$$
and
$$
\limsup_{\tau\to+\infty}\sum_{k = N}^{+\infty} \abs{C_k^*(\tau)} \leq C\sup_{k\geq N}\abs{\lambda_{1,k}-\lambda_{2,k}},
$$
which together with \eqref{t1h} and \eqref{t1i}, yields
\begin{equation}
\label{limsup}
\limsup_{\tau\to+\infty} \abs{S_{1,\tau}-S_{2,\tau}}\leq C\left(\sup_{k\geq N}\abs{\lambda_{1,k}-\lambda_{2,k}}+\left( \sum_{k=N}^{+\infty}\norm{\psi_{1,k}-\psi_{2,k}}_{L^2(\Gamma)}^2 \right)^\frac{1}{2} \right).
\end{equation}
%where $C$ is independent of $N$. 
%With reference to \eqref{t1a}, we thus get that $\lim_{\tau\to+\infty} \abs{S_{1,\tau}-S_{2,\tau}}=0$ by sending $N\to+\infty$ in \eqref{limsup}. 
Therefore, we have
$$\left| \int_\Omega e^{-ix\cdot\xi}(q_1-q_2)dx \right| \leq C\left(\sup_{k\geq N}\abs{\lambda_{1,k}-\lambda_{2,k}}+\left( \sum_{k=N}^{+\infty}\norm{\psi_{1,k}-\psi_{2,k}}_{L^2(\Gamma)}^2 \right)^\frac{1}{2} \right),
$$
from Proposition \ref{p1}, where $C$ is independent of $N$. Now, with reference to \eqref{t1a}, we find upon sending $N$ to infinity in the above estimate, that
\begin{equation}
\label{t1e}
\left| \int_\Omega e^{-ix\cdot\xi}(q_1-q_2)dx \right| \leq C \limsup_{k\to \infty}\abs{\lambda_{1,k}-\lambda_{2,k}}.
\end{equation}

\noindent{\it Step 7: End of the proof.} Let us denote by $q$ the extension of $q_1-q_2$ by zero in $\mathbb{R}^d \setminus \Omega$, and by $\hat{q}$ the Fourier transform of $q$, i.e., 
$$ \hat{q}(\xi) = \int_{\mathbb{R}^d} e^{-i x \cdot \xi} q(x) dx=\int_\Omega e^{-ix\cdot\xi}(q_1-q_2)dx,\ \xi \in \mathbb{R}^d. $$
Then, setting $\Lambda:=\limsup_{k\to \infty}\abs{\lambda_{1,k}-\lambda_{2,k}}$, we may rewrite \eqref{t1e} as
\begin{equation}
\label{t1j}
\abs{\hat{q}(\xi)} \leq C \Lambda,\ \xi \in \mathbb{R}^d.
\end{equation}
Let $r>0$ be fixed. Putting $B_r:=\{ \xi \in \mathbb{R}^d,\ \abs{\xi} < r \}$ and using that
$\int_{B_r} \abs{\hat{q}(\xi)}^2 d \xi \leq C r^d \norm{\hat{q}}_{L^\infty(B_r)}^2$, we deduce from \eqref{t1j} that
\begin{equation}
\label{t1k}
\int_{B_r} \abs{\hat{q}(\xi)}^2 d \xi \leq C r^d \Lambda^2.
\end{equation}
Next, since
$\int_{\mathbb{R}^d \setminus B_r}  (1+\abs{\xi}^2)^{-1} \abs{\hat{q}(\xi)}^2 d \xi \leq r^{-2} \int_{\mathbb{R}^d \setminus B_r}  \abs{\hat{q}(\xi)}^2 d \xi \leq r^{-2} \int_{\mathbb{R}^d}  \abs{\hat{q}(\xi)}^2 d \xi $, we have
$$
\int_{\mathbb{R}^d \setminus B_r}  (1+\abs{\xi}^2)^{-1} \abs{\hat{q}(\xi)}^2 d \xi % \leq  r^{-2} \norm{\frac{2d}{d-2}}}_{L^2(\mathbb{R}^d)}^2 \\
%& \leq & r^{-2} \norm{q}_{L^2(\R^d)}^2 \\
\leq r^{-2} \norm{q}_{L^2(\mathbb{R}^d)}^2
$$
by Parseval's theorem. Thus, keeping in mind that $\norm{q}_{L^2(\mathbb{R}^d)}=\norm{q}_{L^2(\Omega)}\leq M$, we obtain that
$$ \int_{\mathbb{R}^d \setminus B_r}  (1+\abs{\xi}^2)^{-1} \abs{\hat{q}(\xi)}^2 d \xi \leq M^2 r^{-2}. $$
From this, \eqref{t1k} and the identity $\norm{q}_{H^{-1}(\Omega)}^2 = \int_{\mathbb{R}^d}  (1+\abs{\xi}^2)^{-1} \abs{\hat{q}(\xi)}^2 d \xi$, it then follows that
$$
\norm{q}_{H^{-1}(\Omega)} \leq C \left( r^\frac{d}{2} \Lambda + r^{-1} \right),\ r >0.
$$
Finally, taking $r=\left( \frac{4}{nC} \right)^{\frac{2}{n+2}} \Lambda^{-\frac{2}{n+2}}$ in the above inequality to minimize 
its right-hand side, we get \eqref{se}. This completes the proof of Theorem \ref{t1}.

\begin{appendix}

\section{$L^2(\Gamma)$-regularity of the $\psi_k$'s}
\label{sec-L2r}
For $F\in L^2(\Omega)$ we consider the solution $u \in H_0^1(\Omega)$ to the boundary value problem
\begin{equation} 
\label{22}
 \left\{
\begin{array}{ll}
( -\Delta +q)u=F  &\text{in} \;\Omega\\
    u=0  &\text{on} \;\Gamma,
 \end{array}
\right.
\end{equation}
given by Lax-Milgram's theorem. If $q$ were in $L^\infty(\Omega)$ then
$u$ would be in $H^2(\Omega)$ by elliptic regularity, and satisfy
$$\norm{u}_{H^2(\Omega)}\leq C(\norm{u}_{L^2(\Omega)}+\norm{F}_{L^2(\Omega)}) $$
for some positive constant $C=C(\Omega,\norm{q}_{L^\infty(\Omega)})$.
As a result, we would have $\partial_\nu u \in L^2(\Gamma)$ and the estimate
\begin{equation} 
\label{l3a}
\norm{\partial_\nu  u}_{L^2(\Gamma)} \leq C(\norm{u}_{L^2(\Omega)}+\norm{F}_{L^2(\Omega)}),
\end{equation}
where $C$ is another positive constant depending only on $\Omega$ and $\norm{q}_{L^\infty(\Omega)}$. 
However, since $q$ is possibly unbounded in the framework of this article, we cannot apply the standard theory of elliptic PDEs here,  which leaves us with the task of establishing the following result. 
\begin{proposition}
\label{pr1}
Let $q \in \cQ_{c_0}(M)$, where $c_0>0$ and $M>0$ are fixed, and let $F\in L^2(\Omega)$. Let $u\in H^1(\Omega)$ be a solution to \eqref{22}. Then, we have $\partial_\nu  u \in L^2(\Gamma)$ and the estimate \eqref{l3a} holds for some positive constant $C$ depending only on $\Omega$, $c_0$ and $M$.
\end{proposition}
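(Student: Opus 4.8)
The plan is to reduce, via a bootstrap on the integrability of $u$, to the classical $L^2$-elliptic theory for the Dirichlet Laplacian on the $\mathcal{C}^2$ domain $\Omega$, and then to the trace theorem. As a preliminary step I would record the energy estimate $\norm{u}_{H_0^1(\Omega)} \le C\para{\norm{u}_{L^2(\Omega)} + \norm{F}_{L^2(\Omega)}}$ with $C = C(\Omega,c_0,M)$: testing the variational formulation of \eqref{22} with $u$ (legitimate since $q \in L^{\max(2,3d/5)}(\Omega)$ makes the form $a_q$ well defined on $H_0^1(\Omega)$, $\max(2,3d/5)\ge d/2$), the pointwise bound $q \ge -c_0$ gives $-\int_\Omega q\abs{u}^2\,dx \le c_0\norm{u}_{L^2(\Omega)}^2$, whence $\norm{\nabla u}_{L^2(\Omega)}^2 \le \norm{F}_{L^2(\Omega)}\norm{u}_{L^2(\Omega)} + c_0\norm{u}_{L^2(\Omega)}^2$. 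It therefore suffices to bound $\norm{\partial_\nu u}_{L^2(\Gamma)}$ by $C\para{\norm{u}_{L^2(\Omega)}+\norm{F}_{L^2(\Omega)}}$, and in all that follows every constant depends only on $\Omega,c_0,M$ while the right-hand side $\norm{u}_{L^2(\Omega)}+\norm{F}_{L^2(\Omega)}$ is carried along through each step.

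The core of the argument is the bootstrap. Write $-\Delta u = F - q u$ in $\Omega$, $u = 0$ on $\Gamma$, and set $p_0 := \max(2,3d/5)$, so that $q \in L^{p_0}(\Omega)$ with $p_0 \ge 2$ and, crucially, $p_0 > d/2$. Starting from $u \in L^{2d/(d-2)}(\Omega)$, Hölder's inequality gives $qu \in L^{r_1}(\Omega)$ with $\tfrac1{r_1}=\tfrac1{p_0}+\tfrac{d-2}{2d}$; since $F \in L^2(\Omega) \subset L^{r_1}(\Omega)$ ($\Omega$ bounded), the standard $W^{2,s}$-elliptic estimate for the Dirichlet Laplacian on $\mathcal{C}^2$ domains ($1<s<\infty$) yields $u \in W^{2,\min(r_1,2)}(\Omega)$ with a quantitative bound, and Sobolev embedding then puts $u$ in a strictly larger space $L^{a_2}(\Omega)$, $a_2 > 2d/(d-2)$, the strictness coming exactly from $p_0>d/2$. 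Iterating — at stage $k$: $u\in L^{a_k}\Rightarrow qu\in L^{r_k}$ with $\tfrac1{r_k}=\tfrac1{p_0}+\tfrac1{a_k}\Rightarrow u\in W^{2,\min(r_k,2)}\Rightarrow u\in L^{a_{k+1}}$ — the quantity $1/a_k$ decreases by at least the fixed positive amount $2/d-1/p_0$ at each stage, so after finitely many steps either $r_k\ge 2$ or $2\min(r_k,2)\ge d$, i.e.\ $u\in L^\infty(\Omega)$. In either case $qu\in L^2(\Omega)$ (when $u$ is bounded, use $q\in L^{p_0}(\Omega)\subset L^2(\Omega)$), hence $-\Delta u = F-qu\in L^2(\Omega)$ with $u\in H_0^1(\Omega)$, so $u\in H^2(\Omega)$ by $L^2$-elliptic regularity and $\norm{u}_{H^2(\Omega)}\le C\para{\norm{\Delta u}_{L^2(\Omega)}+\norm{u}_{L^2(\Omega)}}\le C\para{\norm{u}_{L^2(\Omega)}+\norm{F}_{L^2(\Omega)}}$.

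Finally I would invoke the trace theorem: from $u\in H^2(\Omega)$ we get $\nabla u\in H^1(\Omega)^d$, whose trace on $\Gamma$ belongs to $H^{1/2}(\Gamma)^d\hookrightarrow L^2(\Gamma)^d$ ($\Gamma$ being a compact $\mathcal{C}^2$ hypersurface); therefore $\partial_\nu u = (\nu\cdot\nabla u)_{|\Gamma}\in L^2(\Gamma)$ with $\norm{\partial_\nu u}_{L^2(\Gamma)}\le C\norm{u}_{H^2(\Omega)}$, which combined with the bound of the previous paragraph gives \eqref{l3a}.

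The step requiring the most care is the bootstrap: one must keep track of the exponents $a_k,r_k$ and verify both that the integrability of $u$ strictly improves at every stage and that the procedure terminates in finitely many steps, sidestepping the borderline Sobolev situations (for instance an apparent $W^{2,d/2}(\Omega)$, which occurs when $d=3$) by using $W^{2,s}(\Omega)\hookrightarrow L^q(\Omega)$ for a large but finite $q$. This is legitimate precisely because the hypothesis $q\in L^{\max(2,3d/5)}(\Omega)$ enforces the strict subcriticality $p_0>d/2$; the energy estimate and the concluding trace step are routine.
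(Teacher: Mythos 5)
Your proposal is correct, but it takes a genuinely different route from the paper. You bootstrap the integrability of $u$ through $W^{2,s}$ elliptic estimates for the Dirichlet Laplacian (only exponents $s\le 2$ are needed, i.e.\ the same Grisvard-type estimates the paper itself invokes) until $qu\in L^2(\Omega)$, and conclude the stronger fact $u\in H^2(\Omega)$ with $\norm{u}_{H^2(\Omega)}\le C\para{\norm{u}_{L^2(\Omega)}+\norm{F}_{L^2(\Omega)}}$, after which the trace theorem gives \eqref{l3a}; this works precisely because $\max(2,3d\slash 5)\ge 2$ and $\max(2,3d\slash 5)>d\slash 2$, the iteration closes in three steps for $d\ge 4$ (your exponents are $r_k=6d\slash(3d+6-2k)$, so $r_1=6d\slash(3d+4)$, $r_3=2$), and the number of steps depends only on $d$, so the constant depends only on $\Omega$, $c_0$, $M$ as required. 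The paper never claims $H^2$-regularity: it approximates $q+c_0$ in $L^{3d\slash5}(\Omega)$ by smooth non-negative potentials, proves uniform $H^1$ and $W^{2,r}$ bounds ($r=6d\slash(3d+4)$, i.e.\ only the first rung of your ladder) for the approximating solutions of \eqref{222}, bounds $\norm{\partial_\nu u_\ell}_{L^2(\Gamma)}$ directly through a Rellich-type identity obtained by multiplying the equation by $\gamma\cdot\nabla u_\ell$ with $\gamma$ a $\mathcal{C}^1$ extension of $\nu$, and then passes to the limit (weakly in $L^2(\Gamma)$, strongly in $W^{2,p}(\Omega)$). Your argument is shorter and yields more (in effect $D(A_q)\subset H^2(\Omega)$), at the price of iterated elliptic regularity, of the tacit identification at each stage of the $H^1_0$ variational solution with the unique $W^{2,s}\cap W^{1,s}_0$ strong solution (standard, but worth a line), and of the borderline Sobolev case $d=3$, which you correctly flag; the paper's boundary-identity argument obtains the $L^2(\Gamma)$ estimate without ever needing $qu\in L^2(\Omega)$, and is the kind of proof that degrades more gracefully if the integrability of $q$ were weakened toward $L^{d\slash2}(\Omega)$, where $H^2$-regularity becomes borderline.
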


The derivation of Proposition \ref{pr1} is similar to the one of \cite[Proposition 2.2]{BKMS} but for the sake of self-containedness of this paper and for the convenience of the reader, we provide the proof of this technical result in Appendix \ref{app-B}, below.

Notice that it follows from Proposition \ref{pr1} that any function $u \in D(A_q)$ has a normal derivative $\partial_\nu u \in L^2(\Gamma)$ satisfying
$$ 
\norm{\partial_\nu u}_{L^2(\Gamma)}\leq C \left( \norm{u}_{L^2(\Omega)}+\norm{A_q u}_{L^2(\Omega)} \right),
$$
where $C$ is a positive constant depending only on $\Omega$ and $M$. 
Specifically, since all the eigenfunctions $\phi_k$, $k \geq 1$, lie in $D(A_q)$, we get that $\psi_k \in L^2(\Gamma)$ and that
\begin{equation}
\label{tm*} 
\norm{\psi_k}_{L^2(\Gamma)} \leq C(1+\abs{\lambda_k}).
\end{equation}

\section{Proof of Proposition \ref{pr1}}
\label{app-B}

By linearity of \eqref{22}, we may assume without limiting the generality of the foregoing that $F$ is real-valued. As a consequence, the solution $u$ to \eqref{22} is real-valued as well.

Put $q_0:=q+c_0$. Since $q_0 \ge 0$ by assumption, we pick a sequence $(q_\ell)_{\ell \ge 1} \in \mathcal C^\infty(\overline{\Omega})$ of non-negative functions satisfying
\begin{equation}
\label{eq-s0}
\lim_{\ell \to +\infty} \norm{q_\ell-q_0}_{L^{\frac{3d}{5}}(\Omega)} =0.
\end{equation}
Then, for each $\ell \ge 1$ we consider the solution $u_\ell \in H^2(\Omega)\cap H^1_0(\Omega)$ to the boundary value problem
\begin{equation} \label{222}
 \left\{
\begin{array}{ll}
( -\Delta +q_\ell)u_\ell=c_0 u+F  &\text{in} \;\Omega\\
    u_\ell=0  &\text{on} \;\Gamma.\\
 \end{array}
\right.
\end{equation}

We split the proof into 4 steps.

\noindent{\it Step 1: The sequence $(u_\ell)_{\ell\ge 1}$ is bounded in $H^1(\Omega)$.} For $\ell \in \mathbb N$ fixed, we multiply the first equation of \eqref{222} by $u_\ell$ and integrate over $\Omega$. We obtain
$\int_\Omega \abs{\nabla u_\ell}^2 dx + \int_\Omega q_\ell \abs{u_\ell}^2 dx =\int_\Omega G u_\ell dx$ with the help of
Green's formula, where $G:=c_0 u+F$.  As a consequence, we have
$$
\int_\Omega \abs{\nabla u_\ell}^2 dx + \int_\Omega q_0 \abs{u_\ell}^2 dx  =\int_\Omega G u_\ell dx- \int_\Omega (q_\ell-q_0) \abs{u_\ell}^2 dx,
$$
which, upon applying Poincaré's inequality and remembering that $q_0 \ge 0$ a.e. in $\Omega$, leads to
$$\norm{u_\ell}^2_{H^1(\Omega)} \leq  C_0 \left( \int_\Omega \abs{q_\ell-q_0} \abs{u_\ell}^2 dx +\int_\Omega \abs{G}  \abs{u_\ell} dx \right). $$
Here and in the sequel, $C_0$ denotes a generic positive constant, depending only on $\Omega$. Taking into account that $H^1(\Omega) \subset L^{\frac{2d}{d-2}}(\Omega)$ and that the embedding is continuous, by the Sobolev embedding theorem (see e.g. \cite[Theorem 1.4.4.1]{Gr}), we infer from the above inequality and Hölder's inequality that for all $\epsilon>0$,
\begin{eqnarray}
\norm{u_\ell}^2_{H^1(\Omega)} %& \leq & C_0 \left(  \int_\Omega \abs{q_\ell-q_0} \abs{u_\ell}^2 dx +C_\epsilon \int_\Omega {|G|^2} dx+ \epsilon\int_\Omega { \abs{u_\ell}^2} dx \\
& \leq  & C_0 \left( \norm{q_\ell-q_0}_{L^{\frac{3d}{5}}(\Omega)} \norm{u_\ell}^2_{L^{\frac{6d}{3d-5}}(\Omega)}+ \int_\Omega \abs{G}  \abs{u_\ell} dx  \right) \nonumber \\
& \leq  & C_0 \left(  \norm{q_\ell-q_0}_{L^{\frac{3d}{5}}(\Omega)} \norm{u_\ell}^2_{H^1(\Omega)}+\epsilon \norm{u_\ell}^2_{L^2(\Omega)}+\epsilon^{-1} \norm{G }_{L^2(\Omega)}^2  \right).\label{eq-s1.2}
\end{eqnarray}
Now, with reference to \eqref{eq-s0}, we pick $\ell_0 \geq 1$ so large
$\norm{q_\ell-q_0}_{L^{\frac{3d}{5}}(\Omega)}\leq \epsilon$ for all $\ell\geq\ell_0$. From this and \eqref{eq-s1.2} it then follows that
$\norm{u_\ell}^2_{H^1(\Omega)} \leq C_0 \left( \epsilon \norm{u_\ell}^2_{H^1(\Omega)} + \epsilon^{-1} \norm{G}_{L^2(\Omega)}^2 \right)$ whenever $\ell \geq \ell_0$. Thus, by taking $\epsilon=(2C_0)^{-1}$ in this estimate, we find that
%$$ \norm{u_\ell}_{H^1(\Omega)} \leq C_0  \norm{G}_{L^2(\Omega)},\ \ell \ge\ell_0. $$
%This together with the identity $G=c u + F$ then yields
\begin{equation} 
\label{l3b}
\norm{u_\ell}_{H^1(\Omega)} \leq C_0 \left( \norm{u}_{L^2(\Omega)} + \norm{F}_{L^2(\Omega)} \right),\ \ell \geq \ell_0.
\end{equation}
%where, from now on, $C_0$ denotes a positive constant depending only on $\Omega$ and $q$, which may change from line to line.

\noindent{\it Step 2: $(u_\ell)_{\ell \geq 1}$ converges to $u$ in $W^{2,p}(\Omega)$, where $p:=2d/(d+2)$}. For $\ell\ \geq \ell_0$ fixed, we put $v_\ell :=u-u_\ell$ in such a way that
\begin{equation}
\label{eq-s2.1}
 \left\{
\begin{array}{ll}
 -\Delta v_\ell +q_0v_\ell=(q_\ell-q_0)u_\ell  &\text{in} \;\Omega\\
    v_\ell=0  &\text{on} \;\Gamma,
 \end{array}
\right.
\end{equation}
according to \eqref{22} and \eqref{222}.
Thus, bearing in mind that $q_0\geq0$ a.e. in $\Omega$, we have
\begin{equation}
\label{l3bb}
\norm{v_\ell}_{H^1(\Omega)} \leq C_0 \norm{(q_\ell-q_0)u_\ell}_{H^{-1}(\Omega)}.
\end{equation}
Moreover, $H^1_0(\Omega)$ being continuously embedded in $L^{\frac{2d}{d-2}}(\Omega)$, the space $L^p(\Omega)$ is, by duality, continuously embedded in $H^{-1}(\Omega)$, and \eqref{l3bb} yields
\begin{equation} 
\label{l3cc}
\norm{v_\ell}_{H^1(\Omega)}\leq C_0\norm{(q_\ell-q_0)u_\ell}_{L^p(\Omega)}.
\end{equation}
Further, in light of \eqref{eq-s2.1} we have
$\norm{v_\ell}_{W^{2,p}(\Omega)}\leq C_0 \left( \norm{q_0v_\ell}_{L^p(\Omega)}+\norm{(q_\ell-q_0)u_\ell}_{L^p(\Omega)} \right)$
from \cite[Theorems 2.4.2.5]{Gr}, and
$\norm{q_0v_\ell}_{L^p(\Omega)}\leq \norm{q_0}_{L^{\frac{d}{2}}(\Omega)}\norm{v_\ell}_{L^{\frac{2d}{d-2}}(\Omega)}\leq C\norm{v_\ell}_{H^1(\Omega)},$
by H\"older's inequality and the Sobolev embedding theorem, where, from now on, $C$ is a generic positive constant depending only on $\Omega$, $c_0$ and $M$.
From this and \eqref{l3cc} it then follows that
\begin{eqnarray*}
\norm{v_\ell}_{W^{2,p}(\Omega)} & \leq & C \norm{(q_\ell-q_0)u_\ell}_{L^p(\Omega)} \nonumber \\
& \le & C \norm{q_\ell-q_0}_{L^{\frac{3d}{5}}(\Omega)} \norm{u_\ell}_{H^1(\Omega)},
\end{eqnarray*}
which together with 
\eqref{eq-s0} and \eqref{l3b}, yields
\begin{equation}
\label{eq2}
\lim_{\ell \to +\infty} \norm{u_\ell-u}_{W^{2,p}(\Omega)}=0.
\end{equation}

\noindent{\it Step 3: The sequence $(u_\ell)_{\ell \ge 1}$ is bounded in $W^{2,r}(\Omega)$, where $r:=6d \slash(3d+4)$}. In light of \eqref{222} and the identity $G=cu +F$, we infer from \cite[Theorem 2.4.2.5]{Gr} upon taking into account that $L^2(\Omega)$ is continuously embedded in $L^{r}(\Omega)$, that
\begin{equation} 
\label{l3c}
\norm{u_\ell}_{W^{2,r}(\Omega)}\leq C (\norm{q_\ell u_\ell}_{L^{r}(\Omega)}+\norm{G}_{L^2(\Omega)}),\ \ell \ge 1.
\end{equation}
Further, as we have
$\norm{q_\ell u_\ell}_{L^{r}(\Omega)}\leq \norm{q_\ell}_{L^{\frac{3d}{5}}(\Omega)}\norm{u_\ell}_{L^{\frac{2d}{d-2}}(\Omega)}\leq C \norm{q_\ell}_{L^{\frac{3d}{5}}(\Omega)}\norm{u_\ell}_{H^1(\Omega)}$
by H\"older's inequality and the Sobolev embedding theorem, and hence
$$\norm{q_\ell u_\ell}_{L^{r}(\Omega)}\leq C(\norm{u}_{L^2(\Omega)}+\norm{F}_{L^2(\Omega)}),\ \ell \ge \ell_0, $$
from \eqref{eq-s0} and \eqref{l3b}, it follows from \eqref{l3c} that
\begin{equation} 
\label{l3d}
\norm{u_\ell}_{W^{2,r}(\Omega)}\leq C \left( \norm{u}_{L^2(\Omega)}+\norm{F}_{L^2(\Omega)} \right),\ \ell \ge \ell_0.
\end{equation}

\noindent{\it Step 4: End of the proof}. We are left with the task of establishing that
\begin{equation}
\label{eq-s4.1}
\norm{\partial_\nu u_\ell}_{L^2(\Gamma)} \leq C \left(\norm{u}_{L^2(\Omega)}+\norm{F}_{L^2(\Omega)} \right),\ \ell \ge \ell_0.
\end{equation}
For this end we consider a vector field $\gamma \in \mathcal{C}^1(\overline{\Omega},\mathbb{R}^d)$ satisfying $\gamma_{|_{\Gamma}}=\nu$, multiply the first line of \eqref{222} by $\gamma\cdot \nabla u_\ell$, and integrate over $\Omega$. We obtain that
\begin{equation}
\label{145}
\int_\Omega (-\Delta u_\ell) \gamma\cdot \nabla u_\ell dx + \int_\Omega q_\ell\; u_\ell \gamma\cdot \nabla u_\ell dx =\int_\Omega c\;u \gamma \cdot \nabla u_\ell dx+\int_\Omega F \gamma \cdot \nabla u_\ell dx,\ \ell \ge 1.
\end{equation}
Applying the divergence formula, the first term on the left-hand side of \eqref{145} reads
\begin{equation}
\label{eq-s4.2}
\int_\Omega (\Delta u_\ell) \gamma\cdot \nabla u_\ell dx 
%&=\int_\Omega div \big((\gamma\cdot \nabla u_\ell) \nabla u_\ell\big) dx-\int_\Omega \nabla(\gamma\cdot \nabla u_\ell)\cdot \nabla u_\ell dx\cr
= \int_\Gamma | \partial_\nu u_\ell |^2 d\sigma -\int_\Omega \nabla(\gamma\cdot \nabla u_\ell)\cdot \nabla u_\ell dx,\ \ell \ge 1.
\end{equation}
Further, writing $\gamma=(\gamma_1,\ldots,\gamma_d)^T$, we get through direct computation that
\begin{eqnarray}
\nabla(\gamma\cdot \nabla u_\ell)\cdot \nabla u_\ell &= & \sum_{i,j= 1}^d \left( \partial_i \left( \gamma_j \partial_j u_\ell \right) \right) \partial_i u_\ell 
\nonumber \\
%&= & \sum_{i,j= 1}^n \left( (\partial_i \gamma_j) \partial_j u_\ell+\gamma_j \partial_i \partial_j u_\ell \right) \partial_i u_\ell \nonumber \\
&= & \sum_{i,j=1}^d (\partial_i \gamma_j) (\partial_j u_\ell) \partial_i u_\ell +\frac{1}{2} \gamma\cdot\nabla \abs{\nabla u_\ell}^2,\ 
\ell \ge 1. \label{eq-s4.3}
\end{eqnarray}
Next, since $\gamma \cdot \nu=1$ on $\Gamma$ and $\abs{\nabla u_\ell}=\abs{\partial_\nu u_\ell}$ on $\Gamma$, we have 
$$\int_\Omega \gamma \cdot\nabla \abs{\nabla u_\ell}^2 dx = \norm{\partial_\nu u_\ell}_{L^2(\Gamma)}^2 -\int_\Omega  ( \nabla \cdot \gamma) \abs{\nabla u_\ell}^2 dx, $$
and hence
$$\int_\Omega \Delta u_\ell (\gamma\cdot \nabla u_\ell) dx =\frac{1}{2} \norm{\partial_\nu u_\ell}_{L^2(\Gamma)}^2 +\int_\Omega H(x) \nabla u_\ell(x) dx,\ \ell \ge 1. $$
from \eqref{eq-s4.2}-\eqref{eq-s4.3}, where
$$H(x) X :=-\sum_{i,j=1}^d (\partial_i \gamma_j)(x) X_j X_i + \frac{1}{2} \left( \nabla \cdot \gamma(x) \right) \abs{X}^2,\ X=(X_1,\ldots,X_d)\in\R^d,\ x \in \Omega. $$
It follows readily from this and \eqref{145} that
\begin{equation}
\label{147}
\frac{1}{2} \norm{\partial_\nu u_\ell}_{L^2(\Gamma)}^2=-\int_\Omega H(x) \nabla u_\ell(x) dx+ \int_\Omega q_\ell u_\ell\ \gamma\cdot \nabla u_\ell dx -\int_\Omega G\ \gamma \cdot \nabla u_\ell dx,\ \ell \ge 1.
\end{equation}
The second term on the right hand side of \eqref{147} can be bounded with the help of H\"older's inequality, as
\begin{eqnarray*}
\left| \int_\Omega q_\ell\; u_\ell\ \gamma\cdot \nabla u_\ell dx \right| &\leq & \norm{\gamma}_{L^\infty(\Omega)^d}\norm{q_\ell}_{L^{\frac{3d}{5}}(\Omega)}\norm{u_\ell}_{L^{\frac{6d}{3d-8}}(\Omega)}\norm{\nabla u_\ell}_{L^{\frac{6d}{3d-2}}(\Omega)}\\
&\leq & C\norm{q_\ell}_{L^{\frac{3d}{5}}(\Omega)}\norm{u_\ell}_{L^{\frac{6d}{3d-8}}(\Omega)}\norm{ u_\ell}_{W^{1,\frac{6d}{3d-2}}(\Omega)},\end{eqnarray*}
in such a way that we have
$\left|\int_\Omega q_\ell\ u_\ell\ \gamma\cdot \nabla u_\ell dx\right| \leq C \norm{q_\ell}_{L^{\frac{3d}{5}}(\Omega)}\norm{u_\ell}_{W^{2,r}(\Omega)}^2$
by the Sobolev embedding theorem, and consequently 
$$\left|\int_\Omega q_\ell\; u_\ell\ \gamma\cdot \nabla u_\ell dx\right|\leq C\left(\norm{u}_{L^2(\Omega)}+\norm{F}_{L^2(\Omega)}\right)^2,\ \ell \ge \ell_0$$
from \eqref{l3d}. 
Putting this together with \eqref{l3b} and \eqref{147}, we obtain \eqref{eq-s4.1}.
%\begin{equation}
%\label{ess}
%\norm{\partial_\nu u_\ell}_{L^2(\Gamma)}^2
% \leq C \left(\norm{u_\ell}_{H^1(\Omega)}^2+\norm{u}_{L^2(\Omega)}^2+\norm{F}_{L^2(\Omega)}^2\right)
%\leq C\left(\norm{u}_{L^2(\Omega)}+\norm{F}_{L^2(\Omega)}\right)^2.
%\end{equation}

As a consequence, the sequence $(\partial_\nu u_\ell)_{\ell \ge 1}$ is weakly convergent in $L^2(\Gamma)$, by Banach-Alaoglu's theorem, and we denote by $w$ its weak limit in $L^2(\Gamma)$. On the other hand, since $(u_\ell)_{\ell \ge 1}$ converges to $u$ in the norm-topology of $W^{2,p}(\Omega)$ according to \eqref{eq2}, $(\partial_\nu u_\ell)_{\ell \ge 1}$ strongly converges to $\partial_\nu u$ in $L^p(\Gamma)$. Therefore, we have $\partial_\nu u=w\in L^2(\Gamma)$ by uniqueness of the limit, which proves the first claim of Proposition \ref{pr1}. Finally, \eqref{l3a} follows from \eqref{eq-s4.1} together with the weak convergence in $L^2(\Gamma)$ of $(\partial_\nu u_\ell)_{\ell \ge 1}$ to $\partial_\nu u$.

\section{Influence of potential and spectral parameter on the Neumann response}

Let $q \in \mathcal{Q}_{c_0}(M)$, where $c_0>0$ and $M>0$ are fixed, and pick $\lambda \in \C \setminus [-c_0,+\infty)$ in such a way that $\lambda$ lies in the resolvent set of $A_q$. Then, for all $f \in H^{3/2}(\Gamma)$, we recall from \cite[Lemma 2.3 and Corollary 2.4]{P} that the boundary value problem
 \begin{equation}
 \label{1}  
 \left\{  
\begin{array}{ll}
 (-\Delta +q-\lambda)u=0 &\text{in}\; \Omega\\
   u=f &\text{on}\;\Gamma, \\          
 \end{array}                     
\right. 
\end{equation}
admits a unique solution $u_\lambda \in W^{2,p}(\Omega)$, where we recall that $p= \frac{2d}{d+2}$, satisfying
\begin{equation}\label{u}
\| u_\lambda \|_{W^{2,p}(\Omega)} \leq C_\lambda \|f\|_{H^{3/2}(\Gamma)},
\end{equation}
for some positive constant $C_\lambda$ depending on $\lambda$. Evidently, $u_\lambda$ depends on $q$ as well, but to ease the notation we suppress this dependence. 

Next, since the trace operator $v \longmapsto \partial_\nu v$ sends $W^{2,p}(\Omega)$ into $W^{1-\frac{1}{p},p}(\Gamma)$, we see that $\partial_\nu u_\lambda$ is well-defined in $L^p(\Gamma)$. We first examine the dependence of $\partial_\nu u_\lambda$ with respect to $q$ in the asymptotic regime $\lambda \to -\infty$.  The following result, which is borrowed from \cite[Lemma 2.1]{BKMS}, specifies that the
influence of the potential $q$ on $\partial_\nu u_\lambda$ is, in some sense, dimmed as $\lambda$ goes to $-\infty$.

\begin{lemma}
\label{lemma 2.2}
Let $q_j \in \cQ_{c_0}(M)$, $j=1,2$. For $\lambda \in \R \setminus [-c_0,+\infty)$, denote by $u_{j,\lambda}$ the solution to the boundary value problem \eqref{1} with $q=q_j$. Then, we have
$$
\lim_{\lambda \to -\infty}\| \partial_\nu u_{1,\lambda}- \partial_\nu u_{2,\lambda} \|_{L^p(\Gamma)} = 0.
$$
\end{lemma}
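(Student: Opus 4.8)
The plan is to pass to the difference $w_\lambda:=u_{1,\lambda}-u_{2,\lambda}$, which by \eqref{1} (with $q=q_1$, resp. $q=q_2$) solves the homogeneous‑boundary problem $(-\Delta+q_1-\lambda)w_\lambda=(q_2-q_1)u_{2,\lambda}$ in $\Omega$, $w_\lambda=0$ on $\Gamma$, and then to control $\norm{\partial_\nu w_\lambda}_{L^p(\Gamma)}$ in terms of the source term $F_\lambda:=(q_2-q_1)u_{2,\lambda}$. Since $q_2-q_1\in L^{\max(2,3d/5)}(\Omega)$ and $u_{2,\lambda}\in W^{2,p}(\Omega)\hookrightarrow L^{2d/(d-2)}(\Omega)$ by \eqref{u}, H\"older's inequality gives $F_\lambda\in L^p(\Omega)$ with $\norm{F_\lambda}_{L^p(\Omega)}\le 2M\norm{u_{2,\lambda}}_{L^r(\Omega)}$, where $r\in(2,2d/(d-2)]$ is the exponent fixed by $1/p=1/\max(2,3d/5)+1/r$.

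The first step is to prove, \emph{uniformly} in $\lambda\le-1-c_0$, that $\norm{w_\lambda}_{W^{2,p}(\Omega)}\le C\norm{F_\lambda}_{L^p(\Omega)}$; combined with the trace bound $\partial_\nu\colon W^{2,p}(\Omega)\to W^{1-1/p,p}(\Gamma)\hookrightarrow L^p(\Gamma)$ this already yields $\norm{\partial_\nu u_{1,\lambda}-\partial_\nu u_{2,\lambda}}_{L^p(\Gamma)}\le C\norm{F_\lambda}_{L^p(\Omega)}$. To get the $W^{2,p}$ bound I would recast the equation as $(-\Delta-\lambda)w_\lambda=F_\lambda-q_1w_\lambda$ and combine three uniform ingredients: the Grisvard elliptic estimate for the Dirichlet Laplacian together with the resolvent bound $\norm{(-\Delta_D-\lambda)^{-1}}_{\mathcal L(L^p)}\le(\mu_1+\abs{\lambda})^{-1}$ (obtained from the contraction estimate $\norm{e^{t\Delta_D}}_{\mathcal L(L^p)}\le e^{-\mu_1 t}$), which gives $\norm{(-\Delta_D-\lambda)^{-1}}_{L^p\to W^{2,p}}\le C$ for $\abs{\lambda}\ge1$; the energy estimate, which owing to the coercivity lower bound $-\Delta+q_1-\lambda\ge-\Delta+\abs{\lambda}-c_0$ yields $\norm{w_\lambda}_{H^1(\Omega)}\le C\norm{F_\lambda}_{L^p(\Omega)}$; and H\"older--Sobolev, which lets one absorb $\norm{q_1w_\lambda}_{L^p(\Omega)}\le CM\norm{w_\lambda}_{L^r(\Omega)}\le CM\norm{w_\lambda}_{H^1(\Omega)}$.

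It then remains to show that $\norm{F_\lambda}_{L^p(\Omega)}=\norm{(q_2-q_1)u_{2,\lambda}}_{L^p(\Omega)}\to0$ as $\lambda\to-\infty$, and I would deduce this from two properties of $u_{2,\lambda}$. \emph{(a) A uniform bound} $\sup_{\lambda\le-1-c_0}\norm{u_{2,\lambda}}_{L^r(\Omega)}<\infty$: writing $u_{2,\lambda}=u_{0,\lambda}+\rho_{2,\lambda}$ with $u_{0,\lambda}$ the potential‑free solution ($(-\Delta-\lambda)u_{0,\lambda}=0$, $u_{0,\lambda}=f$ on $\Gamma$) and $\rho_{2,\lambda}=-(A_{q_2}-\lambda)^{-1}(q_2u_{0,\lambda})$, the correction satisfies $\norm{\rho_{2,\lambda}}_{L^r}\le C\norm{\rho_{2,\lambda}}_{H^1}\le C\norm{q_2u_{0,\lambda}}_{H^{-1}}\le CM\norm{u_{0,\lambda}}_{L^r}$, while for $u_{0,\lambda}$ one uses that $\lambda<0$ makes $\abs{u_{0,\lambda}}$ subharmonic (Kato's inequality, since $\Delta u_{0,\lambda}=-\lambda u_{0,\lambda}$ keeps the sign of $u_{0,\lambda}$), so $\abs{u_{0,\lambda}}$ is dominated in $\Omega$ by the harmonic extension of the boundary datum $\abs{f}$, whose $L^r(\Omega)$ norm is $\le C\norm{f}_{L^r(\Gamma)}\le C\norm{f}_{H^{3/2}(\Gamma)}$, all independently of $\lambda$. \emph{(b) Interior vanishing}: for each fixed $\delta>0$, $\norm{u_{2,\lambda}}_{L^r(\Omega\setminus\Omega_\delta)}\to0$, where $\Omega_\delta:=\{x\in\Omega:\mathrm{dist}(x,\Gamma)<\delta\}$; this comes from a Caccioppoli/Agmon estimate with a cutoff supported away from $\Gamma$, in which the coercivity gain forces $\norm{u_{2,\lambda}}_{L^2(\Omega\setminus\Omega_\delta)}\le C_\delta(\abs{\lambda}-c_0)^{-1/2}\norm{u_{2,\lambda}}_{L^2(\Omega)}$, after which interpolation against the (uniformly bounded) $L^{2d/(d-2)}(\Omega\setminus\Omega_\delta)$ norm upgrades this to $L^r$. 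Given $\varepsilon>0$, one picks $\delta$ with $\norm{q_2-q_1}_{L^{\max(2,3d/5)}(\Omega_\delta)}<\varepsilon$ (absolute continuity of the integral), splits $\norm{F_\lambda}_{L^p(\Omega)}$ over $\Omega_\delta$ and $\Omega\setminus\Omega_\delta$, and invokes (a) and (b).

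The hard part is the uniform $L^r(\Omega)$ bound in (a): as $\lambda\to-\infty$ the solution $u_{2,\lambda}$ develops a boundary layer, so its $W^{2,p}$ and $H^1$ norms blow up and the bound cannot be read off from elliptic or energy estimates — it must use the sign of $\lambda$ through a maximum‑principle/subharmonicity argument; moreover, since $\abs{f}$ need not belong to $L^\infty(\Gamma)$ when $d\ge4$, one has to keep track of the precise exponent $r$ and of the mapping properties of the harmonic Poisson operator on $L^r$. A secondary technical point is that the Caccioppoli estimate in (b) must tolerate the singular potential $q_2$, which is possible only because $q_2\ge-c_0$ makes the zeroth‑order term essentially sign‑definite.
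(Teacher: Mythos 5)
You should first be aware that this paper contains no proof of Lemma \ref{lemma 2.2}: it is explicitly borrowed from \cite[Lemma 2.1]{BKMS}, so the only comparison possible is with that external argument, not with anything in the present text. On its own merits, your proposal is essentially correct and gives a self-contained route. The architecture is sound: $w_\lambda=u_{1,\lambda}-u_{2,\lambda}\in W^{2,p}(\Omega)\cap W^{1,p}_0(\Omega)$ solves the homogeneous Dirichlet problem with source $F_\lambda=(q_2-q_1)u_{2,\lambda}\in L^p(\Omega)$; the $\lambda$-uniform bound $\norm{w_\lambda}_{W^{2,p}(\Omega)}\le C\norm{F_\lambda}_{L^p(\Omega)}$ does follow from Grisvard's estimate combined with the $L^p$ resolvent decay of the Dirichlet Laplacian and the coercivity of $a_{q_1}-\lambda$ (which gives $\norm{w_\lambda}_{H^1(\Omega)}\le C\norm{F_\lambda}_{L^p(\Omega)}$ and hence controls $\norm{q_1w_\lambda}_{L^p(\Omega)}$), and the trace map $W^{2,p}(\Omega)\to W^{1-1/p,p}(\Gamma)\hookrightarrow L^p(\Gamma)$ converts this into the desired boundary estimate. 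Your reduction of the whole lemma to $\norm{(q_2-q_1)u_{2,\lambda}}_{L^p(\Omega)}\to 0$, handled by a boundary-collar/interior splitting (absolute continuity of the $L^{\max(2,3d/5)}$ norm of $q_1-q_2$ on thin collars, Caccioppoli decay of order $(\abs{\lambda}-c_0)^{-1/2}$ in the interior, and a $\lambda$-uniform $L^r(\Omega)$ bound, $r>2$, obtained by Kato/maximum-principle domination of the free solution by the harmonic extension of $\abs{f}$ plus an $H^{-1}\to H^1_0$ resolvent bound for the correction), is exactly the right way to cope with the unboundedness of $q_2-q_1$, and you correctly identified that the crux is the uniform $L^r$ bound with $r>2$, which no energy or elliptic estimate gives.

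A few points need tightening before this is a complete proof. In part (b) you assert that $\norm{u_{2,\lambda}}_{L^{2d/(d-2)}(\Omega\setminus\Omega_\delta)}$ is uniformly bounded; this is true but needs the same cutoff computation on a nested collar (the Caccioppoli identity also yields $\norm{u_{2,\lambda}}_{H^1(\Omega\setminus\Omega_\delta)}\le C_\delta\norm{u_{2,\lambda}}_{L^2(\Omega)}$, then Sobolev), so state it. Kato's inequality must be used in its complex form, since in the application $f=f_\tau^\pm$ is complex-valued, and the comparison with the harmonic extension should be phrased as a weak maximum principle for $H^1$ subharmonic functions whose positive part of the trace vanishes. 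The claimed bound $\norm{(-\Delta_D-\lambda)^{-1}}_{\mathcal{L}(L^p)}\le(\mu_1+\abs{\lambda})^{-1}$ is more than you need and not immediate on $L^p$; submarkovian contractivity already gives $1/\abs{\lambda}$ for $\lambda<0$, which suffices. Finally, record explicitly that $w_\lambda$ lies in the domain $W^{2,p}(\Omega)\cap W^{1,p}_0(\Omega)$ of the $L^p$ Dirichlet realization (so that applying its resolvent to $(-\Delta-\lambda)w_\lambda$ returns $w_\lambda$), and that $u_{2,\lambda}=u_{0,\lambda}+\rho_{2,\lambda}$ by uniqueness of the variational $H^1_0$ solution; with these details filled in, your argument stands as a legitimate alternative to the proof cited from \cite{BKMS}.
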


Having seen this, we seek to examine the dependence of $\partial_\nu u_\lambda$ with respect to the spectral parameter $\lambda$ when the potential $q$ is fixed. This can be done with the aid of \cite[Lemma 2.2]{BKMS}, which expresses the difference $\partial_\nu (u_\lambda-u_\mu)$ in terms of $\lambda$, $\mu \in \C \setminus [-c_0,+\infty)$ and $\bsd(A_q)$, as
\begin{equation}
\label{l5a}
\partial_\nu (u_\lambda - u_\mu)=(\mu - \lambda) \sum_{k=1}^{+\infty}\frac{\langle f, \psi_k \rangle_{L^2(\Gamma)}}{(\lambda-\lambda_k)(\mu-\lambda_k)} \psi_k,
\end{equation}
the series on the right-hand-side of \eqref{l5a} being convergent in $L^2(\Gamma)$.

\end{appendix}

\bigskip

\paragraph{\bf Acknowledgement}
The two authors of this article are partially supported by the Agence Nationale de la Recherche under grant ANR-17- CE40-0029.


\begin{thebibliography}{99}
\bibitem{ASS} {\sc G. Alessandrini, J. Sylvester and Z. Sun}, {\em Stability for multidimensional inverse spectral problem}, Commun. Partial Diff. Eqns., \textbf{15} (5) (1990), 711-736.
 \bibitem{A}{\sc  V. A. Ambartsumian}, {\em \"Uber eine Frage der Eigenwerttheorie}, Z. Phys., \textbf{53} (1929), 690-695.
\bibitem{Bo}{\sc G. Borg}, {\em Eine Umkehrung der Sturm-Liouvilleschen Eigenwertaufgabe}, Acta Math.,
\textbf{78} (1946), 1-96. 
%\bibitem{Bel87}{\sc M. Belishev}, {\em An approach to multidimensional inverse problems for the wave equation}, Dokl. Akad. Nauk %SSSR, \textbf{297} (1987), 524-527.
%\bibitem{Bel92} {\sc M. Belishev and Y. Kurylev}, {\em To the reconstruction of a Riemannian manifold via its spectral data (BC-%method)}, Comm. Partial Differential Equations, \textbf{17} (1992), 767-804.
\bibitem{BCFKS}{\sc M. Bellassoued, M. Choulli, D. D. Feirrera, Y. Kian, P. Stefanov}, {\em A Borg-Levinson theorem for magnetic Schr\"odinger operators on a Riemannian manifold}, to appear in Annales de l'Institut Fourier, arXiv:1807.08857.
\bibitem{BCY}{\sc M. Bellassoued, M. Choulli, M. Yamamoto}, {\em Stability estimate for an inverse wave equation and a multidimensional Borg-Levinson theorem}, J. Diff. Equat., \textbf{247}(2)  (2009), 465-494.
%\bibitem{BD} {\sc M. Bellassoued and D. Dos Santos Ferreira}, {\em Stability estimates for the anisotropic wave equation from the %Dirichlet to-Neumann map,} Inverse Problems and Imaging, \textbf{5} No 4 (2011), 745-773.
\bibitem{BKMS} {\sc M. Bellassoued, Y. Kian, Y. Mannoubi and \'E. Soccorsi}, {\em Identification of unbounded electric potentials through asymptotic boundary spectral data}, arXiv:2202.03694.
\bibitem{CK1} {\sc B. Canuto and O. Kavian}, {\em Determining Coefficients in a Class of Heat Equations via Boundary Measurements}, SIAM Journal on Mathematical Analysis, \textbf{32} no. 5 (2001), 963-986.
\bibitem{CK2} {\sc B. Canuto and O. Kavian}, {\em Determining Two Coefficients in Elliptic Operators via Boundary Spectral Data: a Uniqueness Result}, Bolletino Unione Mat. Ital. Sez. B Artic. Ric. Mat. (8), \textbf{7} no. 1 (2004), 207-230.
%\bibitem{Ch}{\sc M. Choulli}, {\em Une introduction aux probl\`emes inverses elliptiques et paraboliques}, Math\'ematiques et %Applications, Vol. 65, Springer-Verlag, Berlin, 2009.
\bibitem{CS}{\sc M. Choulli and P. Stefanov}, {\em Stability for the multi-dimensional Borg-Levinson theorem with partial spectral data}, 
Commun. Partial Diff. Eqns., \textbf{38} (3) (2013), 455-476.
\bibitem{GL} {\sc I. M. Gel'fand and B. M. Levitan}, {\em On the determination of a differential equation from its spectral function}, Izv. Akad. Nauk USSR, Ser. Mat., \textbf{15} (1951), 309-360.
\bibitem{Gr}  Grisvard. P., Elliptic Problems in Nonsmooth Domains, Pitman, 1985.
\bibitem{I} {\sc H. Isozaki}, {\em Some remarks on the multi-dimensional Borg-Levinson theorem},
J. Math. Kyoto Univ., \textbf{31} (3) (1991), 743-753.
%\bibitem{KK} {\sc A. Katchalov and Y. Kurylev}, {\em Multidimensional inverse problem with incomplete boundary spectral data},
%Commun. Partial Diff. Eqns., \textbf{23} (1998),  55-95. 
%\bibitem{KKL} {\sc A. Katchalov,  Y. Kurylev, M.  Lassas}, {\em Inverse boundary spectral problems}, Chapman \& Hall/CRC, Boca %Raton, FL, 2001, 123, xx+290.
%\bibitem{KKLM} {\sc A. Katchalov, Y. Kurylev, M. Lassas, N. Mandache}, {\em Equivalence of time-domain inverse problems and %boundary spectral problem}, Inverse Probl., \textbf{20} (2004), 419-436.
\bibitem{KKS}{\sc O. Kavian, Y. Kian, E. Soccorsi}, {\it Uniqueness and stability results for an inverse spectral problem in a periodic waveguide}, J. Math. Pures Appl., \textbf{104} (2015), no. 6, 1160-1189.
\bibitem{Ki1}{\sc Y. Kian}, {\em  A multidimensional Borg-Levinson theorem for magnetic Schr\"odinger operators with partial spectral data}, Journal of Spectral Theory, \textbf{8} (2018), 235-269.
%\bibitem{Ki2}{\sc Y. Kian}, {\em Simultaneous determination of coefficients, internal sources and an obstacle of a diffusion equation %from a single measurement}, preprint,  	arXiv:2007.08947.
%\bibitem{KLLY}{\sc Y. Kian, Z. Li, Y. Liu, M. Yamamoto}, {\em The uniqueness of inverse problems for a fractional equation with a %single measurement}, Math. Ann. \textbf{380} (2021), 1465-1495.
%\bibitem{KOM}{\sc Y. Kian, L. Oksanen, M. Morancey}, {\em Application of the boundary control method to partial data Borg-%Levinson inverse spectral problem}, , MCRF, \textbf{9} (2019),  289-312.
%\bibitem{Lassas2010} {\sc M. Lassas and L.Oksanen},  {\em An inverse problem for a wave equation with sources and observations %on disjoint sets}, Inverse Problems,  \textbf{26} (2010), 085012.
\bibitem{L}{\sc N. Levinson}, {\em The inverse Sturm-Liouville problem}, Mat. Tidsskr. B., \textbf{1949} (1949), 25-30.
\bibitem{LM1}{\sc J.-L. Lions and E. Magenes}, 
{\em Non-homogeneous Boundary Value Problems and Applications}, Vol. I, 
Spring
er-Verlag, Berlin, 1972.
\bibitem{NSU} {\sc A. Nachman, J. Sylvester, G. Uhlmann}, {\em An n-dimensional Borg-Levinson theorem}, Comm. Math. Phys., \textbf{115} (4) (1988), 595-605.
\bibitem{PS} {\sc L. P\"aiv\"arinta, V. Serov}, {\em An n-dimensional Borg-Levinson theorem for singular
potentials}, Adv. in Appl. Math., \textbf{29} (2002), no. 4, 509-520.
\bibitem{P} {\sc V. Pohjola}, \emph{Multidimensional Borg-Levinson theorems for unbounded potentials}, Asymptot. Anal., \textbf{110} (2018), no. 3-4, 203-226.
\bibitem{S} {\sc \'E. Soccorsi}, \emph{Multidimensional Borg-Levinson inverse spectral problems}, Contemp. Math., \textbf{757} (2020), 19-50.
\end{thebibliography}
\end{document}